\newcommand{\al}{\alpha}
\newcommand{\be}{\beta}
\newcommand{\ga}{\gamma}
\newcommand{\de}{\delta}
\newcommand{\la}{\lambda}
\newcommand{\auskommentieren}[1]{}
\newcommand{\beq}{\begin{equation}}
\newcommand{\eeq}{\end{equation}}
\newcommand{\bea}{\begin{equation}\begin{aligned}}
\newcommand{\eea}{\end{aligned}\end{equation}}
\newtheorem{theorem}{Theorem}[section]
\newtheorem{lem}[theorem]{Lemma}
\newtheorem{cor}[theorem]{Corollary}
\theoremstyle{definition}
\newtheorem{defi}[theorem]{Definition}
\newtheorem{rem}[theorem]{Remark}
\numberwithin{equation}{section}
\begin{document}
\title{Finite element approximation of Power Mean Curvature Flow}
\author{Heiko Kr\"oner}
\date{August 10, 2013}
\maketitle
\begin{abstract}
In \cite{S} the evolution of hypersurfaces in $\mathbb{R}^{n+1}$ with normal speed equal to a power $k>1$ of the mean curvature is considered and the levelset solution $u$ of the flow is obtained as the $C^0$-limit of a sequence $u^{\epsilon}$ of smooth functions solving the regularized levelset equations. 

We prove a rate for this convergence.

Then we triangulate the domain by using a tetraeder mesh and consider continuous finite elements, which are polynomials of degree $\le 2$ on each tetraeder of the triangulation. We show in the case $n=1$ (i.e. the evolving hypersurfaces are curves), that there are solutions $u^{\epsilon}_h$ of the above regularized equations in the finite element sense, which satisfy for every $0<\Theta<\frac{1}{2}$ an error estimate of the form
\beq
\|u-u^{\epsilon}_h\|_{C^{0, \Theta}}\le c \epsilon^{\la}+c \epsilon^{-\ga}h^{\de},
\eeq
where values for $\la, \ga, \de>0$ can be obtained explicitly.

Our method can be extended to the case $n>1$, if one uses higher order finite elements.
\end{abstract}

\tableofcontents

\footnotetext{
\textsc{Eberhard Karls Universit\"at, Mathematisches Institut, 
Auf der Morgenstelle 10,
D-72076 T\"ubingen,
Germany}\\
\textit {E-mail:} \href{mailto:kroener@na.uni-tuebingen.de}{kroener@na.uni-tuebingen.de} \\
\textit {Url:} \href{http://na.uni-tuebingen.de/~kroener/}{http://na.uni-tuebingen.de/$\sim$kroener/}}

\section{Introduction and main results} \label{intro}

The famous mean curvature flow, cf. e.g. \cite{ES} and \cite{HI2}, evolves hypersurfaces in the direction of their normal with normal speed equal to the mean curvature. This flow has---apart from being of great interest by itself--- important applications in image processing.
During the last thirty years many variants of extrinsic curvature flows have been analyzed, which differ mainly in the prescribed normal velocity and the ambient space, in which the evolution takes place, cf. e.g. the inverse mean curvature flow \cite{HI}, the Gauss curvature flow \cite{A} and the inverse mean curvature flow in a Lorentzian manifold \cite{G}.

Concerning the numerical analysis for these flows there exist results in the case of mean curvature flow, cf. e.g. \cite{DD}, \cite{DDE} and the references therein, and \cite{P} in the case of anisotropic mean curvature flow in higher codimension.
In \cite{D} K. Deckelnick proves a rate of convergence for the approximation of the levelset solution of mean curvature flow by using a finite difference scheme; for the approximation he uses the solution of the regularized levelset equation as an intermediate step and divides the error estimate correspondently into the approximation error between the levelset solution and the solution of the regularized levelset equation and the error for the finite difference approximation of the regularized levelset equation. See also \cite{M} for the former error estimate.

Recently F. Schulze \cite{S} considered the evolution of hypersurfaces in $\mathbb{R}^{n+1}$ in the direction of their normal, for which the normal speed is given by a power $k>1$ of the mean curvature.

To the author's knowledge there do not exist any numerical results for Schulze's flow \cite{S} so far. Our aim is to approximate the levelset solution of this flow using the method of finite elements and to prove a convergence rate. This is done similarly to \cite{D} by using the solution of the regularized levelset equation as an intermediate step. It will come out that a 'polynomial coupling' between the regularization parameter $\epsilon$ and the numerical parameter $h$ will ensure a polynomial convergence rate, cf. Theorem \ref{main_thm_endversion} and Corollary \ref{main_cor}.
In contrast to \cite{D} we use a levelset formulation as suggested in \cite{S}, for which the levelset function does not depend on the time, cf. (\ref{levelset_pmcf}) and \cite[equation (1.1)]{D}. This ensures that the nonlinearity coming from the exponent $k$ affects only lower order (spatial) derivatives of the levelset function.

We introduce our setting more precisely.

Let  $M$ be a smooth $n$-dimensional compact manifold without boundary, $k> 1$  and $x_0:M \rightarrow \mathbb{R}^{n+1}$ a smooth embedding such that $x_0(M)$ has positive mean curvature, then there exist a small $T>0$ and a smooth mapping
\beq
x:M \times [0, T) \rightarrow \mathbb{R}^n
\eeq
with
\bea \label{classical_pmcf}
x(0, \cdot) &= x_0 \\
\dot x(t,\xi) &= -H^k\nu.
\eea
Here, $H$ and $\nu$ denote the mean curvature and the outer normal of $x(t, \cdot)(M)$ at $x(t, \xi)$ respectively, cf. \cite[Section 1]{S}.

We call this a power mean curvature flow (PMCF).

We give a level set formulation of PMCF.
Let $\Omega \subset \mathbb{R}^{n+1}$ be open, connected and bounded having smooth boundary $\partial \Omega$ with positive mean curvature. We call the level sets $\Gamma_t =\partial\{x\in \Omega: u(x)>t\}$ of the continuous function $0 \le u \in C^0(\bar \Omega)$ a level set PMCF, if $u$ is a viscosity solution of 
\bea \label{levelset_pmcf}
div \left( \frac{D u}{|D u|} \right) =& -\frac{1}{|D u|^{\frac{1}{k}}} \\
u|_{\partial \Omega}=&0.
\eea
If $u$ is smooth in a neighborhood of $x\in \Omega$ with non vanishing gradient and satisfies there  (\ref{levelset_pmcf}), then the level set $\{u=u(x)\}$ moves locally at $x$  according to (\ref{classical_pmcf}).

Using elliptic regularization of level set PMCF we obtain the equation
\bea \label{regularized_levelset_pmcf}
div \left(\frac{D u^{\epsilon}}{\sqrt{\epsilon^2+|D u^{\epsilon}|^2}}\right) =& -(\epsilon^2+|D u^{\epsilon}|^2)^{-\frac{1}{2k}} \quad \text{in } \Omega \\
u^{\epsilon}=& 0\quad  \text{on } \partial \Omega,
\eea
which has unique smooth solutions $u^{\epsilon}$ for sufficiently small $\epsilon >0$; moreover, there is $c_0>0$ such that
\beq \label{12}
\|u^{\epsilon}\|_{C^1{(\bar \Omega})} \le c_0
\eeq
and (for a subsequence) 
\beq \label{15}
u^{\epsilon} \rightarrow u \in C^{0,1}(\bar \Omega)
\eeq
in $C^0(\bar \Omega)$. We call $u$ a weak solution of  (\ref{levelset_pmcf}), which is unique for $n\le 6$. 

All the above facts are proved in \cite[Section 4]{S}. 

A weak solution of (\ref{levelset_pmcf}) satisfies (\ref{levelset_pmcf})  in the viscosity sense, cf. Section \ref{Section_the_viscosity_solution_u}.
We formulate our first main result.

\begin{theorem} \label{main_result_neu}
For every $0<\Theta<1$, there is $0<\la=\la(\Theta, k)$ so that
\bea
\|u-u^{\epsilon}\|_{C^{0, \Theta}(\bar \Omega)} \le c \epsilon^{\la},
\eea
where $c=c(\Theta, k, \Omega)>0$ is a constant.
\end{theorem}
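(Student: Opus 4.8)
The plan is to derive the Hölder estimate from an $L^{\infty}$-estimate by interpolation, the latter being a quantitative version of the convergence (\ref{15}), proved by comparison between $u^{\epsilon}$ and the viscosity solution $u$.

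\emph{Step 1: reduction to an $L^{\infty}$-bound.} By (\ref{12}) the family $(u^{\epsilon})$ is bounded in $C^{0,1}(\bar\Omega)$ and by (\ref{15}) so is $u$, hence $\|u-u^{\epsilon}\|_{C^{0,1}(\bar\Omega)}\le c(c_0)$. The elementary interpolation inequality $\|v\|_{C^{0,\Theta}(\bar\Omega)}\le c\,\|v\|_{C^{0}(\bar\Omega)}^{1-\Theta}\|v\|_{C^{0,1}(\bar\Omega)}^{\Theta}$ applied to $v=u-u^{\epsilon}$ reduces the claim to an estimate
\[
\|u-u^{\epsilon}\|_{C^{0}(\bar\Omega)}\le c\,\epsilon^{\mu}\qquad\text{for some }\mu=\mu(k)>0 ,
\]
since then $\la=(1-\Theta)\mu$ works.

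\emph{Step 2: the $L^{\infty}$-estimate.} In the spirit of \cite{D} and the references therein for mean curvature flow, one compares $u^{\epsilon}$ with $u$. For fixed $\epsilon>0$ equation (\ref{regularized_levelset_pmcf}) is uniformly elliptic — in non-divergence form $a^{ij}_{\epsilon}(Du^{\epsilon})D_{ij}u^{\epsilon}=-(\epsilon^{2}+|Du^{\epsilon}|^{2})^{-1/(2k)}$ with $a^{ij}_{\epsilon}(p)=(\epsilon^{2}+|p|^{2})^{-1/2}(\delta^{ij}-p^{i}p^{j}(\epsilon^{2}+|p|^{2})^{-1})\ge 0$ — and has a comparison principle, $u^{\epsilon}$ solves it classically, and $u$ is a viscosity solution of the limit equation (\ref{levelset_pmcf}), cf.\ Section \ref{Section_the_viscosity_solution_u}. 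Since (\ref{levelset_pmcf}) and (\ref{regularized_levelset_pmcf}) differ only by a defect of order $\epsilon^{2}$ as long as $|Du|$ stays bounded away from $0$, while on $\{|Du|\le c_{0}\}$ the gradient bound (\ref{12}) keeps the coefficients under control, the plan is: regularize $u$ by a sup/inf-convolution $u^{\sigma}$ (semiconcave resp.\ semiconvex, solving (\ref{levelset_pmcf}) up to an error governed by $\sigma$), add a small correction $\rho(\epsilon,\sigma)\,\xi$ with $\xi\ge 0$ fixed, smooth and vanishing on $\partial\Omega$, verify that $u^{\sigma}\pm\rho\,\xi$ are a sub- resp.\ supersolution of (\ref{regularized_levelset_pmcf}), compare them with $u^{\epsilon}$ (the boundary values agree up to $O(\rho)$), and finally optimize $\sigma=\sigma(\epsilon)$. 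Alternatively one proves a Cauchy estimate $\|u^{\epsilon'}-u^{\epsilon}\|_{C^{0}(\bar\Omega)}\le c\,\epsilon^{\mu}$ for $0<\epsilon'<\epsilon$ by the maximum principle applied to $u^{\epsilon'}-u^{\epsilon}$ and lets $\epsilon'\to 0$ along the subsequence in (\ref{15}).

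\emph{Main obstacle.} The hard part is the degeneracy of (\ref{levelset_pmcf}) and (\ref{regularized_levelset_pmcf}) at $Du=0$: the ellipticity constant of $a^{ij}_{\epsilon}$ is only of order $\epsilon^{2}$ on $\{|Du|\le c_{0}\}$, and the right hand side $(\epsilon^{2}+|Du|^{2})^{-1/(2k)}$ is as large as $\epsilon^{-1/k}$ near critical points, so the comparison of the two equations is only effective away from $\{Du=0\}$. Choosing the convolution radius $\sigma$ and the correction $\rho\,\xi$ so that the degenerate region near $\{Du=0\}$ is absorbed while the remaining error is still a fixed positive power of $\epsilon$ (independent of $\epsilon'$) is exactly what fixes the constants and the $k$-dependent exponent; letting $\Theta\to 0$ formally recovers the pure $L^{\infty}$-rate $\mu(k)$, while $\Theta\to 1$ costs the factor $1-\Theta$.
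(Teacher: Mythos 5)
Your Step 1 (interpolation between the $C^0$- and the $C^{0,1}$-norm, using (\ref{12})) is exactly how the paper passes from the $L^\infty$-rate to the H\"older rate, so that part is fine. The gap is in Step 2: everything you write there is a plan that explicitly defers the one point where the proof is actually hard, namely how to compare the merely continuous viscosity solution $u$ of the degenerate equation (\ref{levelset_pmcf}) with the smooth solution $u^{\epsilon}$ of (\ref{regularized_levelset_pmcf}) \emph{quantitatively} across the set where gradients vanish. Neither of your two suggested devices closes this. An additive correction $u^{\sigma}\pm\rho\,\xi$ does not produce a strict sub-/supersolution with a usable margin, because the operator $F$ in (\ref{13}) is fully nonlinear and degenerate: perturbing additively changes $Du$ and $D^2u$ in a way whose effect on $F$ is not controlled near $\{Du=0\}$, where the coefficients and the right-hand side $(\epsilon^2+|Du|^2)^{-1/(2k)}$ blow up. And the ``Cauchy estimate'' variant compares solutions of two different equations, so ``the maximum principle applied to $u^{\epsilon'}-u^{\epsilon}$'' is not available as stated.

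The mechanism the paper actually uses, and which is absent from your proposal, is twofold. First, a \emph{multiplicative} perturbation: with $\mu=(1-\epsilon^{s})^{k}$ one has exactly $F(\mu u)=\mu^{1/k}<1$ by the homogeneity of $F$, so $\mu u$ is a strict subsolution with a margin $1-\mu^{1/k}\approx\epsilon^{s}$ that is uniform over $\bar\Omega$, including critical points; this is what your additive $\rho\,\xi$ cannot deliver. Second, since $u$ is not differentiable, the comparison is run by doubling variables: one maximizes $w_{\epsilon}(x,y)=\mu u(x)-u^{\epsilon}(y)-\epsilon^{-\al}|x-y|^{\ga}/\ga$ and invokes the Crandall--Ishii theorem of sums (\cite[Theorem 3.2]{CIL}) to obtain matrices $X,Y$ with $X+Y\le 0$; adding the two viscosity inequalities and playing the resulting upper and lower bounds for $|\hat x-\hat y|$ against each other (this is where the constraints (\ref{9}), (\ref{10}) on $\al,\ga,s$ come from) excludes an interior maximum with $\hat x\neq\hat y$, while the case $\hat x=\hat y$ is excluded because the regularized operator still carries the strictly positive term $\epsilon^{1-1/k}$ at zero gradient ($k>1$). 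Only then does the boundary case give $w_{\epsilon}\le c\epsilon^{r}$ and hence the rate $\epsilon^{\min(r,s)}$ of Theorem \ref{24}. Without the homogeneity trick and the theorem-of-sums argument (or a worked-out substitute), your Step 2 does not constitute a proof.
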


We need some notations before we formulate our second main result in Theorem \ref{main_thm_endversion}.
Let $\{\mathbb{T}_h: 0<h<h_0\}$ be a family  of regular triangulations of $\Omega$, $h$ the mesh size of  $\mathbb{T}_h$ and $h_0=h_0(\Omega)>0$ small, so that for each boundary tetraeder $T\in \mathbb{T}_h$ $n+1$ vertexes lie on $\partial \Omega$. We define
\beq
\Omega^h = \cup_{T\in \mathbb{T}_h}T;
\eeq
since $\Omega$ might lack convexity, there will not hold in general $\Omega^h\subset \bar \Omega$.
Let 
\beq
V_h:=\{w \in C^0(\bar \Omega^h): \forall_{T\in \mathbb{T}_h}w|_T \text{ polynom of degree} \le 2, \; w_{|\partial \Omega^h}=0\}.
\eeq

Let $d: \mathbb{R}^{n+1}\rightarrow \mathbb{R}$,  be the signed distance function of $\partial \Omega$ (sign convention so that $d_{|\Omega}<0$) and $\de_0 = \de_0(\Omega)>0$ small. 
For $0<\de<\de_0$ we define 
\beq
\Omega_{\de}= \{d<\de\}
\eeq
and have $\partial \Omega_{\de}\in C^{\infty}$, $\|\partial \Omega_{\de}\|_{C^{2}}\le c(\Omega)\|\partial \Omega\|_{C^{2}}$. Furthermore, 
there is a constant $0<\tilde c=\tilde c(\Omega)$ so that
\beq \label{851}
\partial \Omega^h \subset \Omega_{\tilde c h^2} \backslash \Omega_{-\tilde c h^2}. 
\eeq
We extend $u^{\epsilon}$ to a function in $C^m(\Omega_{\de_0})$ (and denote the extension by $u^{\epsilon}$ again), $m\in \mathbb{N}$ sufficiently large, so that 
\beq \label{865}
\|u^{\epsilon}\|_{C^m(\Omega_{\de_0})}\le c\|u^{\epsilon}\|_{C^m(\bar\Omega)},
\eeq   
 cf.  \cite[Lemma 1.2.16 in PDE II]{CG}.

\begin{theorem} \label{main_thm_endversion} Let $n=1$ and $n+1<\mu< 4$ and
 $1< \de < \frac{1}{2}+\frac{2}{\mu}$, then there exist $\be, \ga, c>0$ depending on $ \mu, \de, k, \Omega$, so that for every $0<\epsilon<\epsilon_0$, where $\epsilon_0>0$ small, and $h \le c\epsilon^{ \be}$ the equation
\beq \label{930}
\int_{\Omega^h} \frac{\left<D u^{\epsilon}_h, D \varphi_h\right>}{\sqrt{\epsilon^2+|D u^{\epsilon}_h|^2}} = \int_{\Omega^h}(\epsilon^2+|D u^{\epsilon}_h|^2)^{-\frac{1}{2k}}
\varphi_h \quad \forall \; {\varphi_h\in V_h },
\eeq
has a unique solution $u^{\epsilon}_h$ in 
\beq \label{912}
\bar B^h_{\rho}:=\{w_h\in V_h: \|w_h-u^{\epsilon}\|_{H^{1, \mu}(\Omega^h)} \le \rho \},
\eeq
where 
\beq
\rho=c\epsilon^{-\ga}h^{\de}.
\eeq
\end{theorem}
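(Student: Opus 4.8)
The plan is to solve (\ref{930}) by a contraction argument on the finite-dimensional convex set $\bar B^h_{\rho}$, using the interpolant of the (extended) smooth solution $u^{\epsilon}$ as base point; Banach's fixed point theorem then produces \emph{simultaneously} the existence and the uniqueness asserted in the theorem. Write $G^h_{\epsilon}:V_h\to V_h^{*}$ for the operator given by
\[
\langle G^h_{\epsilon}(w),\vp\rangle=\int_{\Omega^h}\frac{\langle Dw,D\vp\rangle}{\sqrt{\epsilon^2+|Dw|^2}}-\int_{\Omega^h}(\epsilon^2+|Dw|^2)^{-\frac1{2k}}\vp ,
\]
so that (\ref{930}) reads $G^h_{\epsilon}(u^{\epsilon}_h)=0$. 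Its Fréchet derivative $L_w:=DG^h_{\epsilon}(w)$ splits into the mean-curvature part, i.e. the positive but degenerate bilinear form with coefficient $(\epsilon^2+|Dw|^2)^{-1/2}\bigl(\mathrm{Id}-\tfrac{Dw\otimes Dw}{\epsilon^2+|Dw|^2}\bigr)$, whose smallest eigenvalue is of order $\epsilon^2$ on $\{|Dw|\le M\}$, plus a lower-order part which (after an integration by parts turning $\langle Dw,D\cdot\rangle(\cdot)$ into a zero-order term) is a bounded perturbation with an $\epsilon^{-1/k-2}$ weight. Consequently $\|L_w^{-1}\|\lesssim\epsilon^{-2}$ on $\bar B^h_{\rho}$, once the relevant $L^{\infty}$-bounds are in force.

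First I would collect the a priori ingredients. By (\ref{12}) one has $\|u^{\epsilon}\|_{C^1(\bar\Omega)}\le c_0$, and elliptic regularity for (\ref{regularized_levelset_pmcf}) together with (\ref{865}) gives $\|u^{\epsilon}\|_{C^m(\Omega_{\de_0})}\le c\,\epsilon^{-\ga_0}$ for the $m$ we need. The standard $P_2$-interpolation estimate then yields $\|u^{\epsilon}-I_hu^{\epsilon}\|_{H^{1,\mu}(\Omega^h)}\le c\,\epsilon^{-\ga_0}h^{\de}$, where the loss of order (from the optimal $h^2$ down to $h^{\de}$ with $\de<\tfrac12+\tfrac2\mu$) is forced by the part of $\Omega^h$ lying in the thin strip $\Omega_{\tilde c h^2}\setminus\bar\Omega$ from (\ref{851}): that boundary-strip contribution is of order $h^{3/2}$, and after the inverse estimate $\|\cdot\|_{H^{1,\mu}}\le c\,h^{-(1-2/\mu)}\|\cdot\|_{H^1}$ used later it becomes $h^{3/2-(1-2/\mu)}=h^{1/2+2/\mu}$. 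The restriction $n+1=2<\mu<4$ enters here: $\mu>2$ makes $H^{1,\mu}(\Omega^h)\hookrightarrow C^0$ in the plane (needed for the $C^{0,\Theta}$-statement of Corollary \ref{main_cor} and for the inverse estimates to be useful), while $\mu<4$ keeps the Sobolev--Morrey exponent $1-\tfrac2\mu$ below $\tfrac12$.

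Next I would introduce the map $\Psi(v):=v-L_{I_hu^{\epsilon}}^{-1}G^h_{\epsilon}(v)$ on $\bar B^h_{\rho}$ (a simplified Newton iteration with frozen Jacobian) and verify the two hypotheses of Banach's theorem. For $v\in\bar B^h_{\rho}$ an inverse estimate gives $\|Dv\|_{L^{\infty}(\Omega^h)}\le\|DI_hu^{\epsilon}\|_{L^{\infty}}+c\,h^{-2/\mu}\|v-I_hu^{\epsilon}\|_{H^{1,\mu}}\le c\,c_0+c\,h^{-2/\mu}\rho\le M$, uniformly in $\epsilon,h$ as soon as $h\le c\epsilon^{\be}$ with $\be$ large; hence all $\epsilon$-dependent constants for $L_v,\ L_v^{-1},\ D^2G^h_{\epsilon}$ on $\bar B^h_{\rho}$ are controlled and $\|L_{I_hu^{\epsilon}}^{-1}\|\lesssim\epsilon^{-2}$. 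Then (i) \emph{self-map}: $\|\Psi(v)-I_hu^{\epsilon}\|_{H^{1,\mu}}\le\|\Psi(v)-\Psi(I_hu^{\epsilon})\|_{H^{1,\mu}}+\|L_{I_hu^{\epsilon}}^{-1}G^h_{\epsilon}(I_hu^{\epsilon})\|_{H^{1,\mu}}$, where the last term is $\le\epsilon^{-2}$ times the consistency error $\|G^h_{\epsilon}(I_hu^{\epsilon})\|_{V_h^{*}}\lesssim\epsilon^{-\ga_1}h^{\de}$ — made up of the interpolation error, the frozen-versus-true coefficient error $\lesssim\epsilon^{-2}\rho$, the $k$-nonlinearity error $\lesssim\epsilon^{-1/k-2}\rho$, and the $\Omega^h$-versus-$\Omega$ boundary-strip error, the latter using that $u^{\epsilon}$ solves (\ref{regularized_levelset_pmcf}) in $\Omega$ — so with $\rho=c\epsilon^{-\ga}h^{\de}$ and $\ga$ large this is $\le\frac12\rho$; and (ii) \emph{contraction}: $\|\Psi(v^1)-\Psi(v^2)\|_{H^{1,\mu}}\le\|L_{I_hu^{\epsilon}}^{-1}\|\,\|G^h_{\epsilon}(v^1)-G^h_{\epsilon}(v^2)-L_{I_hu^{\epsilon}}(v^1-v^2)\|_{V_h^{*}}\lesssim\epsilon^{-2}\bigl(\sup_{\bar B^h_{\rho}}\|D^2G^h_{\epsilon}\|\cdot\rho+\|L_v-L_{I_hu^{\epsilon}}\|\bigr)\|v^1-v^2\|_{H^{1,\mu}}\lesssim\epsilon^{-C}\rho\,\|v^1-v^2\|_{H^{1,\mu}}$, which is $\le\frac12\|v^1-v^2\|_{H^{1,\mu}}$ once $h\le c\epsilon^{\be}$ makes $\rho=c\epsilon^{-\ga}h^{\de}\le c\epsilon^{\be\de-\ga}$ a large enough power of $\epsilon$. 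Each of these steps absorbs one or two further inverse estimates (passing between the $H^1$- and $H^{1,\mu}$-norms of finite element functions, each costing $h^{-(1-2/\mu)}$) into a larger $\be$. Banach's theorem then gives a unique $u^{\epsilon}_h\in\bar B^h_{\rho}$ with $\Psi(u^{\epsilon}_h)=u^{\epsilon}_h$, i.e. $G^h_{\epsilon}(u^{\epsilon}_h)=0$.

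The main obstacle is the interplay of the degeneracy with the nonlinearity: the discrete linearization has inverse only of size $\sim\epsilon^{-2}$, because the factor $\mathrm{Id}-Dw\otimes Dw/(\epsilon^2+|Dw|^2)$ genuinely degenerates where $Du^{\epsilon}$ is small, while the $k$-nonlinearity contributes the weight $\epsilon^{-1/k-2}$; forcing the consistency error small against $\epsilon^{-2}\rho$ and the contraction factor small against $1$ is exactly what dictates the polynomial coupling $h\le c\epsilon^{\be}$, and the bookkeeping that has to be done carefully is checking that the accumulated inverse-estimate losses $h^{-(1-2/\mu)}$, $h^{-2/\mu}$ together with the $h^{3/2}$ boundary-strip error leave room for precisely the range $1<\de<\tfrac12+\tfrac2\mu$. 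The remaining items — the interpolation estimates, the $L^{\infty}$-bound on $Dv$, the coercivity of the frozen bilinear form — are routine.
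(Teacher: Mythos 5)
Your overall architecture is the same as the paper's: a simplified Newton map with frozen Jacobian (you freeze at $I_hu^{\epsilon}$, the paper at the extended $u^{\epsilon}$ itself, via $L_{\epsilon}(w_h-Tw_h)=\Phi_{\epsilon}(w_h)$), combined with Banach's fixed point theorem on $\bar B^h_{\rho}$, with the self-map and contraction conditions enforced by taking $\be,\ga$ large. Your accounting of the boundary-strip error and of the source of the restriction $\de<\tfrac12+\tfrac2\mu$ is also consistent with the paper's step (iv).

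There is, however, a genuine gap at the point you dismiss as routine: the invertibility and stability of the discrete frozen linearized operator. You assert $\|L_w^{-1}\|\lesssim\epsilon^{-2}$ on $V_h$ by viewing the lower-order term as ``a bounded perturbation with an $\epsilon^{-1/k-2}$ weight'' of a principal part whose coercivity constant is of order $\epsilon^2$. No perturbation or G\r{a}rding argument can deliver invertibility from this, since the perturbation is larger than the coercivity constant by many orders of $\epsilon^{-1}$; the bilinear form of $L_{\epsilon}\varphi=-D_i(a^{ij}D_j\varphi)+c^iD_i\varphi$ is genuinely indefinite, and well-posedness of the \emph{discrete} problem does not follow from well-posedness of the continuous one. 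This is exactly what Section 5 of the paper is for: a Schatz-type duality argument (Theorem \ref{221} and Corollary \ref{823}) shows that for $h$ below a threshold $h_0$ depending polynomially on $\epsilon^{-1}$ the discrete linear problem is uniquely solvable, quasi-optimal, and stable with an explicit constant $c_8$. Moreover the standard Schatz argument cannot be applied verbatim, because the adjoint problem would have to be solved with $H^{2,2}$-regularity on $\Omega^h$, which is only Lipschitz and possibly nonconvex; the paper circumvents this by solving the dual problem on a smooth enlarged domain $\Omega_{\de}$ with $\de=h^{3/2}$ and paying an extra $\de^{q}$ in the duality step. Two consequences of this omission propagate through your argument: (a) part of the coupling $h\le c\epsilon^{\be}$ originates already in the linear theory (the condition $h\le h_0$ in (\ref{869})), not only in the consistency and contraction estimates; and (b) the constant in your contraction and self-map bounds must be the discrete stability constant $\tilde c_8$ from Corollary \ref{823}, not a formally computed $\epsilon^{-2}$. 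Without supplying this discrete inf-sup/stability result, the fixed-point map is not even well defined.
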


\begin{cor} \label{main_cor}
In the situation of Theorem \ref{main_result_neu} and Theorem \ref{main_thm_endversion}  holds for $0<\Theta<\frac{1}{2}$
\beq
\|u-u^{\epsilon}_h\|_{C^{0, \Theta}(\Omega^h)} \le c \epsilon^{\la} + c\epsilon^{- \ga}h^{ \de}.
\eeq
\end{cor}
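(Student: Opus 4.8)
The plan is to prove Corollary \ref{main_cor} by a triangle-inequality decomposition of the error into the regularization error and the finite element error, estimating each in the relevant norm. First I would write
\[
\|u-u^{\epsilon}_h\|_{C^{0,\Theta}(\Omega^h)} \le \|u-u^{\epsilon}\|_{C^{0,\Theta}(\Omega^h)} + \|u^{\epsilon}-u^{\epsilon}_h\|_{C^{0,\Theta}(\Omega^h)},
\]
where $u^{\epsilon}$ is understood as its extension to $\Omega_{\delta_0}\supset \Omega^h$ (valid by (\ref{865}) once $h$ is small, since (\ref{851}) puts $\Omega^h$ inside $\Omega_{\tilde c h^2}$). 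The first term is controlled by Theorem \ref{main_result_neu}: for $0<\Theta<\frac12$ we certainly have $\Theta<1$, so $\|u-u^{\epsilon}\|_{C^{0,\Theta}(\bar\Omega)}\le c\epsilon^{\la}$, and the extension estimate together with the $C^1$ bound (\ref{12}) on $u^{\epsilon}$ (and the corresponding Lipschitz bound on $u$ from (\ref{15})) transfers this to the thin collar $\Omega^h\setminus\bar\Omega$ at the cost of an additive $ch^{2\Theta}$, which is absorbed into $c\epsilon^{-\ga}h^{\de}$ for the admissible range of exponents.

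The second term is where Theorem \ref{main_thm_endversion} enters. That theorem gives $\|u^{\epsilon}-u^{\epsilon}_h\|_{H^{1,\mu}(\Omega^h)}\le \rho = c\epsilon^{-\ga}h^{\de}$ for $h\le c\epsilon^{\be}$. Since $n=1$, we are in $\Omega\subset\mathbb{R}^2$, so $n+1=2<\mu<4$, and the Sobolev embedding $H^{1,\mu}(\Omega^h)\hookrightarrow C^{0,1-\frac{2}{\mu}}(\bar\Omega^h)$ holds with embedding constant uniformly bounded in $h$ (this uses that the triangulations are regular and $\Omega^h$ has a uniformly Lipschitz boundary by (\ref{851})). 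Because $\mu>2$, the Hölder exponent $1-\frac{2}{\mu}$ is positive; and because $1<\delta<\frac12+\frac{2}{\mu}$ forces $\mu>\frac{2}{\delta-1/2}>\ldots$, one checks $1-\frac{2}{\mu}$ can be taken $>\Theta$ for the stated range — more simply, since any $C^{0,\Theta'}$ norm controls the $C^{0,\Theta}$ norm for $\Theta\le\Theta'$ on a bounded domain, it suffices that $\Theta<1-\frac{2}{\mu}$, which holds for $\Theta<\frac12$ after choosing $\mu$ close enough to $4$. Hence $\|u^{\epsilon}-u^{\epsilon}_h\|_{C^{0,\Theta}(\Omega^h)}\le c\|u^{\epsilon}-u^{\epsilon}_h\|_{H^{1,\mu}(\Omega^h)}\le c\epsilon^{-\ga}h^{\de}$, possibly after relabelling the constant $\ga$.

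Combining the two bounds yields the claimed estimate $\|u-u^{\epsilon}_h\|_{C^{0,\Theta}(\Omega^h)}\le c\epsilon^{\la}+c\epsilon^{-\ga}h^{\de}$, with $\la$ from Theorem \ref{main_result_neu} and $\ga,\de,\be$ as in Theorem \ref{main_thm_endversion}. The only genuine subtlety — and the step I would be most careful about — is the bookkeeping on the thin symmetric difference $\Omega^h\triangle\bar\Omega$: one must make sure that the $C^{0,\Theta}$-norm on $\Omega^h$, rather than on $\bar\Omega$, does not introduce terms that are not already dominated by the right-hand side, which is why the extension of $u^{\epsilon}$ with the bound (\ref{865}), the collar estimate (\ref{851}), and the uniform $C^1$ control (\ref{12}) are all invoked together; the remainder is a routine application of Sobolev embedding and the two main theorems.
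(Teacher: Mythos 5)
Your decomposition is exactly the intended argument (the paper states the corollary without proof): triangle inequality, Theorem \ref{main_result_neu} for $u-u^{\epsilon}$, and the Sobolev embedding $H^{1,\mu}(\Omega^h)\hookrightarrow C^{0,1-\frac{2}{\mu}}(\bar\Omega^h)$ applied to $u^{\epsilon}-u^{\epsilon}_h\in \bar B^h_{\rho}$, with $\mu$ near $4$ accounting for the restriction $\Theta<\tfrac12$. The only slip is in your collar bookkeeping: interpolating the sup-norm bound $O(h^2)$ on $\Omega^h\setminus\bar\Omega$ against the uniform Lipschitz bounds gives a contribution $ch^{2(1-\Theta)}$ to the H\"older seminorm, not $ch^{2\Theta}$ — and this matters, since $h^{2\Theta}$ with $2\Theta<1<\de$ would \emph{not} be dominated by $h^{\de}$, whereas $h^{2(1-\Theta)}$ is, because $2(1-\Theta)\ge \de$ follows from $\Theta<1-\frac{2}{\mu}$ and $\de<\frac12+\frac{2}{\mu}<2(\frac{2}{\mu})+\frac{2}{\mu}\cdot 0+\dots$, more precisely from $1-\frac{2}{\mu}<1-\frac{\de}{2}$ for $\mu<4$.
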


\begin{rem} 
(i) Our proofs of Theorems \ref{main_result_neu} and \ref{main_thm_endversion} are constructive in the sense, that possible values for $\Theta,  \be,  \ga$ can be calculated explicitly.

(ii) An analogous result to Theorem \ref{main_thm_endversion} can be obtained in case $n>1$ by using higher order finite elements.
\end{rem}
The remaining part of the paper deals with the proof of the above Theorems.
In Section \ref{Section_the_viscosity_solution_u} we give the definition of a viscosity solution of (\ref{levelset_pmcf}), that accounts for the fact, that $Du$ might vanish. Therefore we adapt the definitions in \cite{ES} for the mean curvature flow (where a time dependent levelset function $u$ is used) to our situation.
Furthermore, we show that $u$ in (\ref{15}) is a viscosity solution of (\ref{levelset_pmcf}). 

In Section \ref{Section_u_minus_u_eps} we prove Theorem \ref{main_result_neu}, for what we modify an argument, which is used in \cite{D} to prove a corresponding result for the levelset formulation of mean curvature flow (in \cite{D} a time dependent levelset function is considered too). 

In Section \ref{higher_order_estimates} we derive higher order estimates for $u^{\epsilon}$ and Section \ref{Section_linear_equations} provides explicit constants in some estimates concerning linear equations, which are applied in Section \ref{Section_FE} in order to prove Theorem \ref{main_thm_endversion}. In Section \ref{Section_FE} we proceed similarly to \cite{FNP}, where the regularized levelset equation for the inverse mean curvature flow is approximated by finite elements; in contrast to our paper, cf. especially Corollary \ref{main_cor}, \cite{FNP} does not provide any quantitative information about the approximation error between the finite element solution and the original geometric problem, i.e. the solution of the not regularized levelset equation.

\section{The viscosity solution $u$} \label{Section_the_viscosity_solution_u}
We give the definition of a viscosity solution of (\ref{levelset_pmcf}) and prove that the limit $u$ in (\ref{15}) is a viscosity solution of (\ref{levelset_pmcf}). Both seems to be standard, but since these things are not carried out in \cite{S}, we present them for reasons of completeness here.

To define a viscosity solution of (\ref{levelset_pmcf}) we adapt the corresponding definitions in \cite[Sections 2.2 and 2.3]{ES} and \cite[Section 2]{CIL}.

By formal differentiation we get from (\ref{levelset_pmcf}) that
\beq \label{13}
F(u):= -|D u|^{\frac{1}{k}-1}(\de_{ij}-\frac{u_iu_j}{|Du|^2})u_{ij}=1.
\eeq

 and from (\ref{regularized_levelset_pmcf}) that
\beq \label{14}
F_{\epsilon}(u^{\epsilon}):=-(|D u^{\epsilon}|^2+\epsilon^2)^{\frac{1}{2k}-\frac{1}{2}}(\de_{ij}-\frac{u^{\epsilon}_iu^{\epsilon}_j}{|Du^{\epsilon}|^2+\epsilon^2})u^{\epsilon}_{ij}=1.
\eeq
We need the following definitions.
\begin{defi}
Let $u\in C^0(\Omega)$ and $\hat x \in \Omega$, then we define
\bea
J_{\Omega}^{2,+}u(\hat x) = & \{(p, X)\in  \mathbb{R}^{n+1} \times S(n+1) : 
u(x) \le u(\hat x)  
+ \left<p, x-\hat x\right> \\
& +\frac{1}{2}\left<X(x-\hat x), x-\hat x\right>
+ o(|x-\hat x|^2) \quad \text{as}\quad x\rightarrow \hat x\}
\eea
and for $x \in \Omega$
\bea
\bar J_{\Omega}^{2,+}u( x) =  \{(p, X)\in  \mathbb{R}^{n+1}  \times S(n+1) : 
\text{there are} \; x_k \in \Omega, \; \text{and} \\ \; (p_k, X_k)\in J_{\Omega}^{2,+}u(x_k),\; 
\text{so that}\; (x_k, p_k, X_k)\rightarrow ( x,p,X) \; \},
\eea
where $S(l)$, $l\in \mathbb{N}$, denotes the set of symmetric $n\times n$ matrices.
\end{defi} 
\begin{defi} \label{25}
(i) A continuous function $u: \Omega\rightarrow \mathbb{R}$ is a viscosity subsolution of (\ref{levelset_pmcf}), if for all  $(\eta,X) \in J^{2,+}_{\Omega}(u)(x)$, $x \in \Omega$, there holds
\beq
-|\eta|^{\frac{1}{k}-1}(\de_{ij}-\frac{\eta_i\eta_j}{|\eta|^2})X_{ij}\le1,
\eeq
if $\eta \neq 0$ and
\beq
-(\de_{ij}-\tilde \eta_i\tilde \eta_j)X_{ij}\le0
\eeq
for some $\tilde \eta$ with $|\tilde \eta|\le1$, if $\eta=0$.

(ii) A continuous function $u: \Omega\rightarrow \mathbb{R}$ is a viscosity supersolution of (\ref{levelset_pmcf}), if for all  $(\eta,X) \in J^{2,-}_{\Omega}(u)(x)$, $x \in \Omega$, there holds
\beq
-|\eta|^{\frac{1}{k}-1}(\de_{ij}-\frac{\eta_i\eta_j}{|D\eta|^2})X_{ij}\ge1,
\eeq
if $\eta \neq 0$ and
\beq
-(\de_{ij}-\tilde \eta_i\tilde \eta_j)X_{ij}\ge0
\eeq
for some $\tilde \eta$ with $|\tilde \eta | \le1$, if $\eta=0$.

(iii) A function $u$, which is supersolution and subsolution of (\ref{levelset_pmcf}) is a viscosity solution of (\ref{levelset_pmcf}).
\end{defi}

\begin{rem}
A simple inspection shows that we could have replaced $J^{2,+}_{\Omega}(u)(x)$ in the preceding definition by $\bar J^{2,+}_{\Omega}(u)(x)$ and $J^{2,-}_{\Omega}(u)(x)$ by $\bar J^{2,-}_{\Omega}(u)(x)$.
\end{rem}

Sometimes it is useful to have another definition available.
\begin{defi} \label{26}
(i) A function $u \in C^0(\Omega)$ is a viscosity subsolution of (\ref{levelset_pmcf}), provided that if
\beq
u-\varphi \text{ has a local maximum at a point } x_0 \in \Omega
\eeq
for each $\varphi \in C^{\infty}(\Omega)$, then
\beq
\begin{cases} \label{710}
-|D\varphi|^{\frac{1}{k}-1}(\de_{ij}-\frac{\varphi_i\varphi_j}{|D\varphi|^2})\varphi_{ij}\le1 \text{ at }x_0 \\
\text{if } D\varphi(x_0)\neq 0,
\end{cases}
\eeq
and
\beq
\begin{cases} \label{711}
-(\de_{ij}-\eta_i\eta_j)X_{ij}\le 0 \text{ at } x_0 \\
\text{for some }\eta\in \mathbb{R}^n \text{ with }|\eta| \le 1, \text{ if }D\varphi(x_0) =0.
\end{cases}
\eeq
(ii) A function $u \in C^0(\Omega)$ is a viscosity supersolution of (\ref{levelset_pmcf}), provided that if
\beq
u-\varphi \text{ has a local maximum at a point } x_0 \in \Omega
\eeq
for each $\varphi \in C^{\infty}(\Omega)$, then
\beq
\begin{cases}
-|D\varphi|^{\frac{1}{k}-1}(\de_{ij}-\frac{\varphi_i\varphi_j}{|D\varphi|^2})\varphi_{ij}\ge1 \text{ at }x_0 \\
\text{if } D\varphi(x_0)\neq 0,
\end{cases}
\eeq
and
\beq
\begin{cases}
-(\de_{ij}- \eta_i\eta_j)X_{ij}\ge 0 \text{ at } x_0 \\
\text{for some }\eta\in \mathbb{R}^n \text{ with }|\eta| \le 1, \text{ if }D\varphi(x_0) =0.
\end{cases}
\eeq
\end{defi}

\begin{theorem}
Definitions \ref{25} and \ref{26} are equivalent.
\end{theorem}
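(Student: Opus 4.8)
The plan is to prove the equivalence of Definitions \ref{25} and \ref{26} by exploiting the standard relationship between the semijets $J^{2,+}_{\Omega}u(x)$ and the set of test functions touching $u$ from above at $x$; the argument for super/subsolutions is identical up to signs, so I would carry it out only for the subsolution case.

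First I would recall the elementary fact (see e.g.\ \cite[Section 2]{CIL}) that $(\eta, X)\in J^{2,+}_{\Omega}u(x_0)$ if and only if there exists $\varphi\in C^\infty(\Omega)$ such that $u-\varphi$ has a local maximum at $x_0$ with $D\varphi(x_0)=\eta$ and $D^2\varphi(x_0)=X$. One direction is immediate: given such a $\varphi$, the second-order Taylor expansion of $\varphi$ at $x_0$ together with the local maximum inequality $u(x)-\varphi(x)\le u(x_0)-\varphi(x_0)$ yields exactly the defining inequality of $J^{2,+}_{\Omega}u(x_0)$. For the converse one must manufacture a smooth test function from a pair $(\eta,X)$ satisfying the quadratic estimate; the classical construction (Jensen's lemma / Lemma 2.3 type argument in \cite{CIL}) produces $\varphi(x)=u(x_0)+\langle\eta,x-x_0\rangle+\tfrac12\langle X(x-x_0),x-x_0\rangle+\sigma(|x-x_0|)$ where $\sigma$ is a suitable $C^\infty$ modulus absorbing the $o(|x-x_0|^2)$ term while keeping $D\varphi(x_0)=\eta$, $D^2\varphi(x_0)=X$; then $u-\varphi$ has a strict local maximum at $x_0$.

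With this dictionary in hand the equivalence is purely a matter of matching the two lists of inequalities. In Definition \ref{25}(i) the conditions are imposed for every $(\eta,X)\in J^{2,+}_{\Omega}u(x)$; under the correspondence these are precisely the conditions imposed in Definition \ref{26}(i) for every $\varphi\in C^\infty(\Omega)$ with $u-\varphi$ having a local maximum at $x_0$, because the relevant quantities $|\eta|^{1/k-1}(\delta_{ij}-\eta_i\eta_j/|\eta|^2)X_{ij}$ and $(\delta_{ij}-\tilde\eta_i\tilde\eta_j)X_{ij}$ depend on $\varphi$ only through $\eta=D\varphi(x_0)$ and $X=D^2\varphi(x_0)$. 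The degenerate case $\eta=0$ transfers verbatim: the clause ``$-(\delta_{ij}-\tilde\eta_i\tilde\eta_j)X_{ij}\le0$ for some $|\tilde\eta|\le1$'' in \ref{25} is literally (\ref{711}) in \ref{26}. The supersolution case is obtained by replacing $J^{2,+}$ with $J^{2,-}$, maxima with minima, and reversing the inequalities, using that $J^{2,-}_{\Omega}u(x_0)=-J^{2,+}_{\Omega}(-u)(x_0)$.

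The only genuinely non-routine point is the converse direction of the semijet--test-function correspondence, i.e.\ upgrading a pair $(\eta,X)$ obeying only an asymptotic quadratic inequality to an honest smooth $\varphi$ with the prescribed $1$-jet and $2$-jet and a local max of $u-\varphi$; this is where the modulus-of-continuity mollification argument of \cite{CIL} is invoked, and I would simply cite it rather than reproduce it. Everything else is bookkeeping, and in particular one checks the harmless subtlety that it suffices to use $C^\infty$ test functions (rather than $C^2$) since the construction can be carried out smoothly, so Definitions \ref{25} and \ref{26} describe the same class of functions.
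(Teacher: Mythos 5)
Your proposal is correct, but for the nontrivial direction (Definition \ref{26} $\Rightarrow$ Definition \ref{25}) it takes a different route from the paper. You invoke the full semijet--test-function correspondence from \cite{CIL}: given $(\eta,X)\in J^{2,+}_{\Omega}u(x_0)$ you manufacture, via the modulus-of-continuity mollification, a single smooth $\varphi$ with \emph{exactly} $D\varphi(x_0)=\eta$ and $D^2\varphi(x_0)=X$ such that $u-\varphi$ has a local maximum at $x_0$; the equivalence then reduces to bookkeeping, as you say. The paper avoids citing that lemma altogether: it takes the explicit quadratic test functions
\[
\varphi_{\de}(y)=u(x)+\eta(y-x)+(y-x)^tX(y-x)+\de|y-x|^2,
\]
whose Hessian at $x$ is only $X+\de E$ rather than $X$, applies (\ref{710})--(\ref{711}) to each $\varphi_\de$, and then lets $\de\to0$, using continuity of the operator in the Hessian argument (and, in the degenerate case $\eta=0$, compactness of $\{|\tilde\eta|\le1\}$ to extract a convergent subsequence of the auxiliary vectors). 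Your version buys a cleaner ``dictionary'' statement and a strict maximum at the price of an appeal to the nontrivial CIL construction, which you do not reproduce; the paper's version is entirely elementary and self-contained but needs the extra limiting step in the differential inequalities. The easy direction (Taylor expansion showing $(D\varphi(x_0),D^2\varphi(x_0))\in J^{2,+}_{\Omega}u(x_0)$) and the reduction of the supersolution case by symmetry are identical in both arguments. No gap, provided you are content to cite the semijet--test-function lemma rather than prove it.
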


\begin{proof}
We only consider the case of viscosity subsolutions.

(i) We assume that $u$ is a viscosity subsolution according to Definition \ref{25}. Assume that $u-\varphi$ has a local maximum at a point $x_0\in \Omega$ for a fixed $\varphi \in C^{\infty}(\Omega)$. Hence for $x\in \Omega$ close to $x_0$ we get
\bea
u(x) \le& \varphi(x)+u(x_0) - \varphi(x_0) \\
=& u(x_0) + D\varphi(x_0)(x-x_0) + \frac{1}{2}D^2\varphi(x_0)(x-x_0)(x-x_0) + o(|x-x_0|^2),
\eea
which implies
\beq
(D\varphi(x_0), D^2\varphi(x_0)) \in J_{\Omega}^{2,+}(u)(x_0)
\eeq
and the claim follows.

(ii) We assume that $u$ is a viscosity subsolution according to Definition \ref{26}. Let $(\eta, X)\in J_{\Omega}^{2,+}(u)(x)$, $x\in \Omega$. Define for $\de>0$
\bea
\varphi_{\de}(y) = u(x) + \eta(y-x)+(y-x)^tX(y-x) + \de|y-x|^2
\eea
then $u-\varphi_{\de}$  has a local maximum in $x$. Hence (\ref{710}), (\ref{711}) hold with
\beq
D\varphi_{\de}(x)=\eta, \quad D^2\varphi_{\de}(x)=X+\de E.
\eeq
Letting $\de\rightarrow 0$ proves the claim.
\end{proof}

\begin{lem}
The function $u$ in (\ref{15}) is a viscosity solution of (\ref{levelset_pmcf}).
\end{lem}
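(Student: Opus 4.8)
The plan is to show that the $C^0$-limit $u$ of the regularized solutions $u^\epsilon$ inherits the sub- and supersolution properties of the approximating equations $F_\epsilon(u^\epsilon)=1$ in the limit. I will treat only the subsolution case, since the supersolution case is entirely symmetric, and work with Definition \ref{26}. So fix $\varphi\in C^\infty(\Omega)$ and suppose $u-\varphi$ has a strict local maximum at $x_0\in\Omega$ (one may always reduce to a strict maximum by subtracting $|x-x_0|^4$, which does not change $D\varphi(x_0)$ or $D^2\varphi(x_0)$). Since $u^\epsilon\to u$ uniformly on a neighborhood of $x_0$, standard viscosity-solution perturbation gives points $x_\epsilon\to x_0$ at which $u^\epsilon-\varphi$ attains a local maximum, hence $Du^\epsilon(x_\epsilon)=D\varphi(x_\epsilon)$ and $D^2u^\epsilon(x_\epsilon)\le D^2\varphi(x_\epsilon)$ as symmetric matrices.

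Next I would plug this into the (classical, since $u^\epsilon$ is smooth) equation (\ref{14}) evaluated at $x_\epsilon$. Writing $a^{ij}_\epsilon(p):=(\de_{ij}-\frac{p_ip_j}{|p|^2+\epsilon^2})$, which is positive definite, the matrix inequality $D^2u^\epsilon(x_\epsilon)\le D^2\varphi(x_\epsilon)$ together with $a_\epsilon^{ij}\ge 0$ yields
\beq
-(|D\varphi(x_\epsilon)|^2+\epsilon^2)^{\frac{1}{2k}-\frac12}a_\epsilon^{ij}(D\varphi(x_\epsilon))\varphi_{ij}(x_\epsilon)\le -(|Du^\epsilon|^2+\epsilon^2)^{\frac{1}{2k}-\frac12}a_\epsilon^{ij}u^\epsilon_{ij}=1
\eeq
at $x_\epsilon$ (using $Du^\epsilon(x_\epsilon)=D\varphi(x_\epsilon)$ in the coefficient). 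Now I distinguish two cases. If $D\varphi(x_0)\neq 0$, then for $\epsilon$ small $D\varphi(x_\epsilon)\neq 0$ as well, and letting $\epsilon\to 0$ the left-hand side converges to $-|D\varphi(x_0)|^{\frac1k-1}(\de_{ij}-\frac{\varphi_i\varphi_j}{|D\varphi|^2})\varphi_{ij}$ at $x_0$, giving (\ref{710}). If $D\varphi(x_0)=0$, I keep the inequality in the form $-a_\epsilon^{ij}(D\varphi(x_\epsilon))\varphi_{ij}(x_\epsilon)\le (|D\varphi(x_\epsilon)|^2+\epsilon^2)^{\frac12-\frac{1}{2k}}$; since $k>1$ the exponent $\frac12-\frac{1}{2k}$ is positive and $D\varphi(x_\epsilon)\to 0$, so the right-hand side tends to $0$. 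The vectors $\eta_\epsilon:=D\varphi(x_\epsilon)/\sqrt{|D\varphi(x_\epsilon)|^2+\epsilon^2}$ satisfy $|\eta_\epsilon|\le 1$; passing to a subsequence $\eta_\epsilon\to\eta$ with $|\eta|\le 1$, and using $\varphi_{ij}(x_\epsilon)\to\varphi_{ij}(x_0)=X$, the limit gives $-(\de_{ij}-\eta_i\eta_j)X_{ij}\le 0$, which is (\ref{711}).

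The main obstacle, and the only place requiring genuine care, is the case $D\varphi(x_0)=0$: one must extract the right limiting direction $\eta$ from the normalized gradients and make sure the degenerate-exponent term really vanishes — this is exactly where the hypothesis $k>1$ enters, since for $k=1$ the prefactor would be constant and the argument would collapse. The localization step (producing $x_\epsilon$ and the matrix inequality) is standard, but I would spell it out since it is the engine of the proof; the reduction to a strict maximum and the subsequence selection are routine. Once the subsolution property is established, the supersolution property follows verbatim with inequalities reversed and local maxima replaced by local minima, and then $u$ is a viscosity solution of (\ref{levelset_pmcf}) by Definition \ref{26}(iii), hence also by Definition \ref{25} in view of the equivalence just proved.
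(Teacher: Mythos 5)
Your proposal is correct and follows essentially the same route as the paper's proof: locate approximate maximum points $x_\epsilon\to x_0$ via uniform convergence, use the smooth equation $F_\epsilon(u^\epsilon)=1$ at $x_\epsilon$ together with $Du^\epsilon=D\varphi$, $D^2u^\epsilon\le D^2\varphi$, and pass to the limit in the two cases $D\varphi(x_0)\neq 0$ and $D\varphi(x_0)=0$ (with the normalized gradients $\eta_\epsilon$ and the reduction to a strict maximum via $|x-x_0|^4$). No substantive differences.
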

\begin{proof} We adapt \cite[Section 4.3]{ES}.
Let $\varphi \in C^{\infty}(\Omega)$ and suppose $u-\varphi$ has a {\it strict} local maximum at a point $x_0 \in \Omega$. As 
$u^{\epsilon}\rightarrow  u$ uniformly, $u^{\epsilon}-\varphi$ has a local maximum at a point $x_{\epsilon}\in \Omega$ with
\beq \label{19}
x_{\epsilon} \rightarrow x_0 \quad \text{as }\epsilon \rightarrow 0.
\eeq
Since $u^{\epsilon}$ and $\varphi$ are smooth, we have
\beq
Du^{\epsilon} = D \varphi, \quad D^2u^{\epsilon} \le D^2 \varphi \quad \text{at } x_{\epsilon}.
\eeq
Thus (\ref{14}) implies
\beq \label{18}
-(|D \varphi|^2+\epsilon^2)^{\frac{1}{2k}-\frac{1}{2}}(\de_{ij}-\frac{\varphi_i\varphi_j}{|D\varphi|^2+\epsilon^2})\varphi_{ij}\le1
\quad \text{ at } x_{\epsilon}.
\eeq
Suppose first $D\varphi(x_0) \neq 0$. Then $D\varphi(x_{\epsilon}) \neq 0$ for small $\epsilon>0$. We consequently may pass to limits in (\ref{18}), recalling (\ref{19}) to deduce 
\beq \label{21}
-|D \varphi|^{\frac{1}{k}-1}(\de_{ij}-\frac{\varphi_i\varphi_j}{|D\varphi|^2})\varphi_{ij}\le 1 \quad \text{at } x_0.
\eeq
Next, assume instead $D\varphi(x_0)=0$. Set
\beq
\eta^{\epsilon}:= \frac{D\varphi(x_{\epsilon})}{(|D\varphi(x_{\epsilon})|^2+\epsilon^2)^{\frac{1}{2}}}
\eeq
so that (\ref{18}) becomes
\beq \label{20}
-(\de_{ij}-\eta^{\epsilon}_i\eta^{\epsilon}_j)\varphi_{ij}\le
(|D \varphi|^2+\epsilon^2)^{\frac{1}{2}-\frac{1}{2k}} \quad \text{at } x_{\epsilon}.
\eeq 
Since $|\eta^{\epsilon}|\le 1$, we may assume, upon passing to a subsequence and relabeling if necessary, that $\eta^{\epsilon}\rightarrow \eta$ in $\mathbb{R}^n$ for some $|\eta|\le 1$. Sending $\epsilon$ to $0$ in (\ref{20}) we discover
\beq \label{22}
-(\de_{ij}-\eta_i\eta_j)\varphi_{ij}\le 0 \quad \text{at } x_0.
\eeq

If $u-\varphi$ has a local maximum, but not necessarily a strict maximum at $x_0$, we repeat the argument above with $\varphi(x)$ replaced by 
\beq
\tilde \varphi(x) = \varphi(x) + |x-x_0|^4,
\eeq
again to obtain (\ref{21}) or (\ref{22}).

Consequently, $u$ is a weak subsolution. That $u$ is a weak supersolution follows analogously.
\end{proof}

\section{Estimate of $u-u^{\epsilon}$} \label{Section_u_minus_u_eps}
We first define some constants, which will determine an error estimate for $u^{\epsilon}-u$, as will become clear in the succeeding Theorem \ref{24}.

Let
\beq \label{9}
\ga >1+k
\eeq
and $\al, s>0$ be small so that
\beq \label{10}
\be_1(\al, s) > \be_2(\al, s),
\eeq
where 
\bea \label{23_}
\be_1(\al, s) :=\frac{2-s+\al(2-\frac{1}{k})}{\ga(2-\frac{1}{k})+\frac{1}{k}-1}, \quad 
\be_2(\al, s):=\frac{\al+ks}{\ga-k-1}
\eea
and choose 
\beq \label{714}
0<r<\frac{\al}{\ga}.
\eeq 

\begin{theorem} \label{24}
There is $c=c(k, \Omega)>0$ such that
\beq
\|u^{\epsilon}-u\|_{C^0(\bar \Omega)} \le c \epsilon^{\min(r,s)}
\eeq
for all $\epsilon>0$.
\end{theorem}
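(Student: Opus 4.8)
The plan is to establish the two one-sided bounds $\sup_{\bar\Omega}(u^\epsilon-u)\le c\,\epsilon^{\min(r,s)}$ and $\sup_{\bar\Omega}(u-u^\epsilon)\le c\,\epsilon^{\min(r,s)}$ separately; the arguments being symmetric I describe the first. By \eqref{15} the regularized solutions converge, $u^\sigma\to u$ uniformly along the relevant subsequence, so it suffices to produce a bound $\sup_{\bar\Omega}(u^\epsilon-u^\sigma)\le c\,\epsilon^{\min(r,s)}$ that is uniform in $0<\sigma<\epsilon$ and then let $\sigma\to0$. Working with $u^\sigma$ rather than $u$ is convenient on three counts: $u^\sigma$ is smooth, it satisfies $u^\sigma\ge0$ with $u^\sigma|_{\partial\Omega}=0$ and $\|u^\sigma\|_{C^1(\bar\Omega)}\le c_0$ by \eqref{12}, and for each fixed $\epsilon>0$ the equation $F_\epsilon(\cdot)=1$ of \eqref{14}, equivalently \eqref{regularized_levelset_pmcf}, is a uniformly elliptic quasilinear equation and therefore enjoys a classical comparison principle.

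The core step is to build out of $u^\sigma$ a function $v=u^\sigma+\epsilon^{r}\chi+c\,\epsilon^{s}$, with $\chi\in C^2(\bar\Omega)$ a fixed nonnegative auxiliary function (for instance a strictly concave one, normalised so that passing from $u^\sigma$ to $v$ raises the value of $F_\epsilon$ by a definite positive amount), which is a supersolution of $F_\epsilon(\cdot)=1$ and satisfies $v\ge0=u^\epsilon$ on $\partial\Omega$. The verification $F_\epsilon(v)\ge1$ reduces to showing that the increment produced by the correction $\epsilon^{r}\chi$ dominates the discrepancy $F_\epsilon(u^\sigma)-F_\sigma(u^\sigma)$ between the two regularized operators. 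Diagonalising $F_\epsilon$ and $F_\sigma$ with respect to the common gradient $q:=Du^\sigma$, this discrepancy is an explicit expression in $D^2u^\sigma$, the prefactors $(|q|^2+\epsilon^2)^{\frac1{2k}-\frac12}$ and $(|q|^2+\sigma^2)^{\frac1{2k}-\frac12}$, and the coefficients $\tfrac{\epsilon^2}{|q|^2+\epsilon^2}$, $\tfrac{\sigma^2}{|q|^2+\sigma^2}$ of the degenerate (gradient) direction; one splits into the regime $|q|\ge\epsilon^{r}$, where the two operators are uniformly close and the discrepancy is a controlled positive power of $\epsilon$, and the regime $|q|<\epsilon^{r}$, where one uses the equation $F_\sigma(u^\sigma)=1$ to control $D^2u^\sigma$ transversally to $q$ together with the available (degenerating) Hölder/Schauder bounds for the regularized equation, which contribute the negative power $\epsilon^{-\gamma}$ and are the reason one imposes $\gamma>1+k$ and $0<r<\alpha/\gamma$ in \eqref{9} and \eqref{714}. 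Matching powers of $\epsilon$, the requirement that $\epsilon^{r}\chi$ beat the discrepancy in both regimes translates into the two conditions ``admissible exponent $\le\be_1(\alpha,s)$'' and ``admissible exponent $\ge\be_2(\alpha,s)$'', and hypothesis \eqref{10}, $\be_1>\be_2$, is precisely the non-emptiness of this window. Granting the construction, the comparison principle for $F_\epsilon(\cdot)=1$ gives $u^\epsilon\le v$ in $\Omega$, hence $\sup_{\bar\Omega}(u^\epsilon-u^\sigma)\le\sup_{\bar\Omega}(\epsilon^{r}\chi+c\,\epsilon^{s})\le c\,\epsilon^{\min(r,s)}$; the reverse inequality follows from the same construction with the subsolution $u^\sigma-\epsilon^{r}\chi-c\,\epsilon^{s}$, and letting $\sigma\to0$ concludes. (An essentially equivalent route is a doubling-of-variables argument for $u^\epsilon$ against $u^\sigma$ with penalisation $|x-y|^2/(2\theta)$, $\theta$ a power of $\epsilon$, in which the theorem of sums plays the role of the explicit Hessian estimates; the same window $\be_1>\be_2$ reappears when optimising $\theta$.)

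The main obstacle is that the family $\{F_\epsilon\}_{\epsilon>0}$ degenerates as $\epsilon\to0$: the ellipticity in the gradient direction is only of size $\tfrac{\epsilon^2}{|Du|^2+\epsilon^2}$, while, because $\tfrac1{2k}-\tfrac12<0$, the scalar prefactor $(|Du|^2+\epsilon^2)^{\frac1{2k}-\frac12}$ blows up as $Du\to0$; so there is no off-the-shelf comparison for the limit problem (indeed uniqueness of $u$ is only known for $n\le6$), and the whole estimate must be extracted from the regularized level alone. Keeping track of how the degenerate gradient direction and the singular prefactor interact with the correction $\epsilon^{r}\chi$, with the gradient threshold $\epsilon^{r}$, and with the $\epsilon$-dependence of the elliptic estimates — and arranging the bookkeeping of exponents so that the admissible window does not collapse — is the delicate point, and this is exactly what the somewhat opaque definitions \eqref{9}, \eqref{10} and \eqref{714} are designed to encode; once the window is closed, the remainder is a routine application of the comparison principle.
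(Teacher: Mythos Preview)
Your proposal does not match the paper's argument, and the main route you sketch has a genuine gap.

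\textbf{The gap in the barrier approach.} Your supersolution $v=u^\sigma+\epsilon^{r}\chi+c\,\epsilon^{s}$ requires you to evaluate $F_\epsilon(u^\sigma)$ pointwise and compare it with $F_\sigma(u^\sigma)=1$. This involves $D^2u^\sigma$ pointwise, and the only bounds available (cf.\ Section~\ref{higher_order_estimates}) degenerate like negative powers of $\sigma$, not of $\epsilon$. Since you need the construction uniformly in $0<\sigma<\epsilon$ before letting $\sigma\to0$, the estimate collapses. The equation $F_\sigma(u^\sigma)=1$ controls only one weighted trace of $D^2u^\sigma$, not its full size, so it cannot rescue the small-gradient regime. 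You also misread the role of $\gamma$: in \eqref{9}--\eqref{714} it is the \emph{exponent of the penalisation} $|x-y|^\gamma$ in a doubling argument, not a Schauder-type power of $\epsilon$; and the $\epsilon^{-\alpha}$ (not $\epsilon^{-\gamma}$) is the penalisation coefficient.

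\textbf{What the paper actually does.} The paper doubles variables directly between the viscosity solution $u$ and $u^\epsilon$, setting
\[
w_\epsilon(x,y)=\mu\,u(x)-u^\epsilon(y)-\tfrac{\epsilon^{-\alpha}}{\gamma}|x-y|^{\gamma},\qquad \mu=(1-\epsilon^{s})^{k}.
\]
Two ideas you are missing are essential. First, the \emph{multiplicative} scaling $\mu u$: by homogeneity of the singular operator $F$ one has $F(\mu u)=\mu^{1/k}=1-\epsilon^{s}$, which manufactures a strict gap of size $\epsilon^{s}$; your additive $c\,\epsilon^{s}$ does nothing since $F_\epsilon$ ignores constants. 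Second, working with $u$ (merely Lipschitz) forces the use of the theorem of sums \cite{CIL}, which delivers matrices $X,Y$ with $X+Y\le0$ and the estimate \eqref{5_} \emph{without} any $C^2$ control; this is precisely how the paper sidesteps the degeneration that kills your barrier. The choice $\gamma>2$ is also used so that the penalisation Hessian $B$ vanishes when $\hat x=\hat y$, cf.\ \eqref{30}; the quadratic penalty $|x-y|^2/(2\theta)$ in your parenthetical alternative would not give $B=0$ there, and the case $\hat x=\hat y$ in Lemma~\ref{lemma1} would fail. The constraints $\gamma>1+k$ and $\beta_1>\beta_2$ then arise from the algebra of the contradiction \eqref{23}, and $r<\alpha/\gamma$ from the boundary estimate in Lemma~3.5.
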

\begin{cor}
By interpolation we get in the situation of Theorem \ref{24} for $0<\theta<1$ that
\beq
[u^{\epsilon}-u]_{\theta, \Omega} \le c(\theta, k, \Omega)\epsilon^{\min(r,s)(1-\theta)} \quad \forall \epsilon>0,
\eeq
where the bracket denotes the H\"older semi-norm.
\end{cor}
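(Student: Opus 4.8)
The plan is to prove the Hölder estimate by interpolation, playing the $C^0$-bound of Theorem \ref{24} against a uniform Lipschitz bound for $u^{\epsilon}-u$ that is already contained in the a priori estimates quoted from \cite{S}. First I would record such a Lipschitz bound: by (\ref{12}) we have $\|u^{\epsilon}\|_{C^1(\bar\Omega)}\le c_0$, and since $\Omega$ is bounded, connected and has smooth boundary it is quasiconvex (any two points $x,y\in\bar\Omega$ can be joined by a curve inside $\bar\Omega$ of length $\le c(\Omega)|x-y|$), so this gradient bound gives $[u^{\epsilon}]_{1,\bar\Omega}\le c(\Omega)c_0$, where $[w]_{1,\bar\Omega}:=\sup_{x\ne y}|w(x)-w(y)|/|x-y|$ denotes the Lipschitz seminorm. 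Sending $\epsilon\to0$ in (\ref{15}) and using that this seminorm is lower semicontinuous under uniform convergence, the same bound holds for $u$. Hence $[u^{\epsilon}-u]_{1,\bar\Omega}\le M$ with $M=M(k,\Omega)$ independent of $\epsilon$.

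Next I would invoke the elementary interpolation inequality
\beq
[w]_{\theta,\Omega}\le 2\,\|w\|_{C^0(\Omega)}^{\,1-\theta}\,[w]_{1,\Omega}^{\,\theta},\qquad 0<\theta<1,
\eeq
valid for every $w\in C^{0,1}(\bar\Omega)$. It follows at once from the two pointwise bounds $|w(x)-w(y)|\le[w]_{1,\Omega}|x-y|$ and $|w(x)-w(y)|\le 2\|w\|_{C^0(\Omega)}$ together with the numerical inequality $\min(at,b)\,t^{-\theta}\le a^{\theta}b^{1-\theta}$ for $a,b,t>0$, proved by distinguishing whether $t\le b/a$ or $t>b/a$.

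Finally, applying this with $w=u^{\epsilon}-u$ and inserting $\|u^{\epsilon}-u\|_{C^0(\bar\Omega)}\le c\,\epsilon^{\min(r,s)}$ from Theorem \ref{24} together with $[u^{\epsilon}-u]_{1,\bar\Omega}\le M$ gives
\beq
[u^{\epsilon}-u]_{\theta,\Omega}\le 2\,\bigl(c\,\epsilon^{\min(r,s)}\bigr)^{1-\theta}M^{\theta}=c(\theta,k,\Omega)\,\epsilon^{\min(r,s)(1-\theta)},
\eeq
which is the assertion. There is essentially no obstacle here; the only point that merits a word of care is the uniform Lipschitz bound for the limit $u$ itself, which I would obtain via lower semicontinuity of the seminorm rather than by trying to control $Du$ directly, since a priori $u$ is only known to lie in $C^{0,1}(\bar\Omega)$.
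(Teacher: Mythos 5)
Your argument is correct and is precisely the interpolation argument the paper intends (the paper offers no further proof beyond the phrase ``by interpolation''): you combine the $C^0$-bound of Theorem \ref{24} with the $\epsilon$-uniform Lipschitz bound coming from (\ref{12}) and (\ref{15}) via the elementary inequality $[w]_{\theta,\Omega}\le 2\|w\|_{C^0}^{1-\theta}[w]_{1,\Omega}^{\theta}$. Your care about quasiconvexity of $\Omega$ and lower semicontinuity of the Lipschitz seminorm for the limit $u$ is a welcome, if routine, filling-in of details.
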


\begin{rem} \label{712} We explain how we can deduce  rates of convergence explicitly.
Since inequality (\ref{10}) 'improves'  for decreasing $s>0$ we choose $s=\frac{\al}{\ga}$ in view of (\ref{714}). Then we maximize $\frac{\al}{\ga}$ with respect to $\al, \ga$ under the constraints $\al>0$,(\ref{9}) and (\ref{10}). This can be done by assuming equality in (\ref{10}) and solving this equation for $\al$, which is possible; then it suffices to maximize a nonlinear expression for $\ga$ under the constraint that the weak inequality $\ge$ holds in (\ref{9}). Small perturbations if necessary of maximizers of the latter optimization problem lead to feasible values for $r,s$. 
\end{rem}
In the remaining part of this section we prove Theorem \ref{24} by adapting  the proof of \cite[Theorem 1.2]{D}.

For $\epsilon>0$ we define $w_{\epsilon}:\bar \Omega \times \bar \Omega \rightarrow \mathbb{R}$ by
\beq
w_{\epsilon} (x,y):=\mu u(x)-u^{\epsilon}(y)-\frac{\epsilon^{-\al}}{\ga}|x-y|^{\ga}, \quad x,y \in \bar \Omega,
\eeq
where 
\beq \label{7}
\mu=\mu(\epsilon)=(1-\epsilon^s)^k.
\eeq
We use the abbreviation
\beq \label{11}
\varphi(x,y) := \frac{\epsilon^{-\al}}{\ga}|x-y|^{\ga}.
\eeq

Let $\hat x, \hat y \in \bar \Omega$ such that
\beq
w_{\epsilon}(\hat x, \hat y)= \sup_{\bar \Omega\times \bar \Omega}w.
\eeq
\begin{lem} \label{lemma1}
There holds $\hat x \in \partial \Omega$ or $\hat y \in \partial \Omega$.
\end{lem}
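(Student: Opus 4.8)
The plan is to argue by contradiction: suppose the supremum of $w_\epsilon$ over $\bar\Omega\times\bar\Omega$ is attained at an interior point $(\hat x,\hat y)\in\Omega\times\Omega$, and derive a contradiction with the viscosity sub-/supersolution properties of $u$ and the PDE for $u^\epsilon$. This is the standard doubling-of-variables device from the Crandall–Ishii–Lions theory, so I would follow the structure of \cite[Theorem 1.2]{D} adapted to the stationary level-set equation (\ref{levelset_pmcf}). Set $\varphi(x,y)=\frac{\epsilon^{-\al}}{\ga}|x-y|^\ga$ as in (\ref{11}); since $\ga>1+k>2$, $\varphi$ is $C^2$ in both variables and its second derivatives vanish at the diagonal, which is exactly what makes the penalisation admissible.

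First I would record that, because $u^\epsilon\to u$ uniformly, $w_\epsilon(\hat x,\hat y)\ge w_\epsilon(x,x)=\mu u(x)-u^\epsilon(x)\ge -o(1)$ for any $x$, so the maximum value is bounded below; combined with the boundedness of $u,u^\epsilon$ this forces $\frac{\epsilon^{-\al}}{\ga}|\hat x-\hat y|^\ga$ to stay bounded, hence $|\hat x-\hat y|\to 0$. Next, assuming $\hat x,\hat y\in\Omega$, I would apply the Theorem on Sums (Ishii's lemma, \cite{CIL}) to the function $w_\epsilon$ at its interior maximum: there exist symmetric matrices $X,Y$ with
$$(D_x\varphi(\hat x,\hat y),X)\in\bar J^{2,+}_\Omega(\mu u)(\hat x),\qquad (-D_y\varphi(\hat x,\hat y),Y)\in\bar J^{2,-}_\Omega(u^\epsilon)(\hat y),$$
and $X\le Y$ in the sense of the block-matrix inequality governed by $D^2\varphi$. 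Note $D_x\varphi(\hat x,\hat y)=-D_y\varphi(\hat x,\hat y)=:\eta=\epsilon^{-\al}|\hat x-\hat y|^{\ga-2}(\hat x-\hat y)$, the same vector for both, and that $(\eta/\mu,X/\mu)\in\bar J^{2,+}_\Omega(u)(\hat x)$. Since $u^\epsilon$ is smooth and solves (\ref{14}) classically, $Y=D^2u^\epsilon(\hat y)$ and the PDE gives an exact identity; the viscosity subsolution inequality for $u$ (Definition \ref{25}(i), using the remark that $\bar J^{2,+}$ may be used) gives an inequality at $\hat x$. Subtracting the two and using $X\le Y$ together with the monotonicity/ellipticity of the operator $a\mapsto -(\delta_{ij}-\frac{a_ia_j}{|a|^2})a_{ij}$-type nonlinearity in the Hessian slot, plus the factor $\mu=(1-\epsilon^s)^k<1$, I would extract a strict gain of order $\epsilon^s$ on one side against a loss controlled by the mismatch between $|\eta|$ and $|\eta|^2+\epsilon^2$ in the two coefficient functions $(|\cdot|)^{\frac1k-1}$ versus $(|\cdot|^2+\epsilon^2)^{\frac1{2k}-\frac12}$, which is where the exponents $\al,s,\ga$ and the constraints (\ref{9}), (\ref{10}) enter. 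The case $\eta=0$ (degenerate gradient) is handled separately using the second alternative in Definition \ref{25}, exactly as in the lemma showing $u$ is a viscosity solution.

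The main obstacle I expect is the bookkeeping of the two competing small quantities: the ``good'' term $1-\mu\sim k\epsilon^s$ coming from the scaling factor must dominate the ``bad'' terms arising from (a) the discrepancy between the degenerate coefficient $|\eta|^{\frac1k-1}$ and the regularised one $(|\eta|^2+\epsilon^2)^{\frac1{2k}-\frac12}$, and (b) the size of $|\eta|\sim\epsilon^{-\al}|\hat x-\hat y|^{\ga-1}$, which can be large if $|\hat x-\hat y|$ is not small enough relative to $\epsilon^{\al}$. Closing this requires first proving a quantitative rate $|\hat x-\hat y|\le c\,\epsilon^{\text{(something)}}$ — presumably the content of the next lemma once $\hat x$ or $\hat y$ is forced to the boundary — so the present Lemma \ref{lemma1} is really the qualitative first step: show that if both points were interior, the Ishii-lemma comparison would be violated outright for small $\epsilon$, because then there is no boundary term to absorb and the sign of $1-\mu$ wins cleanly. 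Concretely, I would show the interior-interior case yields $1\le\mu\cdot(\text{something}\le 1)<1$ after the subtraction, a contradiction, so at least one of $\hat x,\hat y$ must lie on $\partial\Omega$.
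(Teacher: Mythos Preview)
Your framework is right---contradiction plus the Crandall--Ishii--Lions Theorem on Sums at an interior maximum---but the contradiction mechanism you sketch at the end is not the one that works, and this is exactly where the parameter constraints (\ref{9})--(\ref{10}) enter. You are comparing a viscosity solution of (\ref{levelset_pmcf}) with a classical solution of the \emph{regularised} equation (\ref{14}); after adding the two viscosity inequalities and using $X+Y\le0$ you do \emph{not} obtain a clean ``$1\le\mu^{1/k}\cdot(\text{something}\le1)<1$'', because the extra $\epsilon^2$ in the regularised coefficients leaves a residual term of definite sign. What one actually gets in the case $\xi:=\hat x-\hat y\neq0$ (after sending the Ishii parameter $\rho\to0$) is an inequality of the form
\[
-\frac{(\gamma-1)\,\epsilon^{2-\alpha}|\xi|^{\gamma-2}}{|\xi|^{2\gamma-2}\epsilon^{-2\alpha}+\epsilon^2}\ \le\ (\mu^{1/k}-1)\,\bigl(\epsilon^{-\alpha}|\xi|^{\gamma-1}\bigr)^{1-\frac1k},
\]
both sides negative. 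Comparing the left side with each summand of the denominator separately produces \emph{simultaneously} an upper bound $|\xi|\le c_1\epsilon^{\beta_1}$ and a lower bound $|\xi|\ge c_2\epsilon^{\beta_2}$, with $\beta_1,\beta_2$ exactly as in (\ref{23_}); condition (\ref{10}), $\beta_1>\beta_2$, then makes these incompatible for small $\epsilon$. So the quantitative control on $|\hat x-\hat y|$ is not ``presumably the content of the next lemma'': it is generated \emph{inside} this proof and \emph{is} the contradiction in the nondegenerate case.

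Two further corrections. The degenerate case $\hat x=\hat y$ must be treated separately: there $D_x\varphi=0$ and $B=0$ (since $\gamma>2$), and combining the second alternative of Definition~\ref{25} with $X+Y\le0$ and $\eta^tY\eta\le0$ yields $\epsilon^{1-1/k}\le0$; note that $\mu$ plays no role here, so again the contradiction is not of the ``$\mu<1$'' type you propose. Finally, Ishii's lemma does not give you $Y=D^2u^\epsilon(\hat y)$; it gives matrices $X,Y$ linked through the full block inequality (\ref{2}) governed by $A=D^2\varphi$, and it is this block structure---tested on vectors $(\zeta,\zeta)$ and $(0,\xi)$ respectively---that delivers both $X+Y\le0$ and the quantitative bound on $\xi^tY\xi$ needed above.
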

\begin{proof}
We assume $\hat x, \hat y \in \Omega$.  From \cite[Theorem 3.2]{CIL} we deduce that for every $\rho>0$ there are $X,Y \in S(n+1)$ such that
\beq \label{1}
(D_x\varphi(\hat x, \hat y), X) \in \bar J_{\Omega}^{2,+}(\mu u)(\hat x) \quad \wedge \quad (D_y\varphi(\hat x, \hat y), Y) \in \bar J_{\Omega}^{2,+}(-u^{\epsilon})(\hat y)
\eeq
and
\beq \label{2}
-(\frac{1}{\rho}+\|A\|)I 
\le \begin{pmatrix} X & 0\\ 0 & Y   \end{pmatrix}
\le A + \rho A^2,
\eeq
where $A:= D^2\varphi(\hat x, \hat y)$. We calculate
\beq  \label{8}
D_x\varphi(\hat x, \hat y)=\epsilon^{-\al}|\xi|^{\ga-2}\xi=-D_y\varphi(\hat x, \hat y), \quad \xi=\hat x-\hat y,
\eeq
and
\beq \label{30}
A=\begin{pmatrix} B & -B\\ -B & B   \end{pmatrix}, \quad B = \epsilon^{-\al}|\xi|^{\ga-4}((\ga-2)\xi\otimes\xi+|\xi|^2I).
\eeq
Using
\beq
F(\mu u)=\mu^{\frac{1}{k}}, \quad F_{\epsilon}(-u^{\epsilon})=-1,
\eeq
we conclude from (\ref{1}) that
\beq \label{3}
-(\de_{ij}-\frac{D_{x^i}\varphi D_{x^j}\varphi}{|D_x\varphi|^2})X_{ij} \le \mu^{\frac{1}{k}}|D_x\varphi|^{1-\frac{1}{k}} \text{ at } \hat x 
\eeq
if $D\varphi(\hat x) \neq 0$ and
\beq \label{3_neu}
-(\de_{ij}- \eta_i\eta_j)X_{ij}\le 0 \text{ at } \hat x \\
\eeq
for some $\eta\in \mathbb{R}^n \text{ with }|\eta| \le 1$ if $D\varphi(\hat x)=0$;
furthermore, there holds
\beq \label{3_neu_neu}
-(\de_{ij}-\frac{D_{y^i}\varphi D_{y^j}\varphi}{|D_y\varphi|^2+\epsilon^2})Y_{ij} \le -(|D_y\varphi|^2+\epsilon^2)^{\frac{1}{2}-\frac{1}{2k}}.
\eeq
From (\ref{2}) we get for all $\zeta \in \mathbb{R}^n$
\bea
\zeta^t(X+Y)\zeta =& (\zeta^t, \zeta^t)\begin{pmatrix} X & 0\\ 0 & Y   \end{pmatrix}\begin{pmatrix} \zeta\\ \zeta   \end{pmatrix} \\
\le& (\zeta^t, \zeta^t)\left\{\begin{pmatrix} B & -B\\ -B & B   \end{pmatrix}+2\rho\begin{pmatrix} B^2 & -B^2\\ -B^2 & B^2   \end{pmatrix}\right\}\begin{pmatrix} \zeta\\ \zeta   \end{pmatrix} \\
=& 0,
\eea
i.e. 
\beq \label{4}
X+Y \le 0,
\eeq
and
\bea \label{5_}
\xi^tY\xi = & (0, \xi^t)\begin{pmatrix} X & 0\\ 0 & Y   \end{pmatrix}\begin{pmatrix} 0\\ \xi   \end{pmatrix}\\
\le& \xi^tB\xi + 2 \rho \xi^tB^2\xi \\
\le& (\ga-1)\epsilon^{-\al}|\xi|^{\ga}+2 \rho \xi^tB^2\xi. 
\eea

{\it Case $\hat x \neq \hat y$:}
We add the inequalities (\ref{3}) and (\ref{3_neu_neu}) and get
\bea \label{5}
LHS:=& -(\de_{ij}-\frac{D_{x^i}\varphi D_{x^j}\varphi}{|D_x\varphi|^2})X_{ij}-(\de_{ij}-\frac{D_{y^i}\varphi D_{y^j}\varphi}{|D_y\varphi|^2+\epsilon^2})Y_{ij}  \\
\le&   (\mu^{\frac{1}{k}}-1)|D_x\varphi|^{1-\frac{1}{k}}. 
\eea
We estimate $LHS$ from below
\bea \label{6}
LHS =& -(\de_{ij}-\frac{D_{x^i}\varphi D_{x^j}\varphi}{|D_x\varphi|^2})(X_{ij}+Y_{ij}) \\
& -\epsilon^2 \frac{D_{x^i} \varphi D_{x^j} \varphi}{|D_x\varphi|^2(|D_y\varphi|^2+\epsilon^2)}Y_{ij} \\
 \ge& -\frac{\epsilon^2 \xi^tY\xi}{|\xi|^2(|\xi|^{2\ga-2}\epsilon^{-2\al}+\epsilon^2)} \\
 \ge& \frac{-(\ga-1)\epsilon^{2-\al}|\xi|^{\ga}-2\epsilon^2\rho\xi^tB^2\xi}{|\xi|^2(|\xi|^{2\ga-2}\epsilon^{-2\al}+\epsilon^2)}
\eea
where we used (\ref{4}) and (\ref{5_}). Combining (\ref{5}) with (\ref{6}), letting $\rho \rightarrow 0$ and applying the relations (\ref{7}) and (\ref{8}) yield
\beq
-\frac{(\ga-1)\epsilon^{2-\al}|\xi|^{\ga-2}}{|\xi|^{2\ga-2}\epsilon^{-2\al}+\epsilon^2} \le -\epsilon^{s-\al(1-\frac{1}{k})}|\xi|^{(\ga-1)(1-\frac{1}{k})}.
\eeq
We multiply this inequality by the denominator of the left-hand side and deduce two inequalities
\bea
-(\ga-1)\epsilon^{2-\al}|\xi|^{\ga-2}\le&
 -\epsilon^{s-\al(3-\frac{1}{k})}|\xi|^{(\ga-1)(1-\frac{1}{k})+2\ga-2} \\
-(\ga-1)\epsilon^{2-\al}|\xi|^{\ga-2}\le&
 -\epsilon^{s+2-\al(1-\frac{1}{k})}|\xi|^{(\ga-1)(1-\frac{1}{k})},
\eea
which lead to 
\bea
(\ga-1)\epsilon^{2-s+\al(2-\frac{1}{k})} \ge& |\xi|^{\ga(2-\frac{1}{k})-1+\frac{1}{k}} \\
(\ga-1)^k\epsilon^{-\al-ks} \ge& |\xi|^{-\ga+k+1}.
\eea
Accounting for (\ref{9}) we have
\bea \label{23}
|\xi| \le& (\ga-1)^{\frac{1}{\ga(2-\frac{1}{k})-1+\frac{1}{k}}} \epsilon^{\frac{2-s+\al(2-\frac{1}{k})}{\ga(2-\frac{1}{k})+\frac{1}{k}-1}}=:c_1\epsilon^{\be_1(\al, s)} \\
|\xi|\ge& (\ga-1)^{\frac{k}{-\ga+k+1}}\epsilon^{\frac{\al+ks}{\ga-k-1}}=:c_2\epsilon^{\be_2(\al, s)}.
\eea
In view of (\ref{10}) we get a contradiction for small $\epsilon>0$.

{\it Case $\hat x = \hat y$:}  
Due to $\ga>2$ and (\ref{30}) we have $B=0$, so that a calculation as in (\ref{5_}) (now with $\eta$ instead of $\xi$) shows
\beq
\eta^tY\eta \le 0.
\eeq
Hence, adding (\ref{3_neu}) to (\ref{3_neu_neu}) and having (\ref{4}) in mind we get
\bea
\epsilon^{1-\frac{1}{k}} \le& (\de_{ij}-\eta_i\eta_j)X_{ij} + \de_{ij}Y_{ij} \\
\le& (\de_{ij}-\eta_i\eta_j)(X_{ij} + Y_{ij}) + \eta^tY\eta \\
\le& 0,
\eea
which is a contradiction.
\end{proof}
\begin{lem}
There is $c_4>0$ such that
\beq
w_{\epsilon}(\hat x, \hat y) \le c_4 \epsilon^{r}.
\eeq
\end{lem}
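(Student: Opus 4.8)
The plan is to use the structural information from Lemma \ref{lemma1}, namely that at the maximizing pair $(\hat x,\hat y)$ at least one of the two points lies on $\partial\Omega$, together with the homogeneous boundary data $u|_{\partial\Omega}=0$, $u^{\epsilon}|_{\partial\Omega}=0$ and the Lipschitz bound (\ref{12}), (\ref{15}) for $u$.

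First I would dispose of the case $\hat x\in\partial\Omega$. Then $u(\hat x)=0$, and since $u^{\epsilon}\ge 0$ in $\bar\Omega$ (the right-hand side of (\ref{regularized_levelset_pmcf}) is sign-definite, so the minimum principle applies; this is part of the existence theory in \cite[Section 4]{S}) and the penalty term $\varphi\ge 0$, one gets immediately
\beq
w_{\epsilon}(\hat x,\hat y)=-u^{\epsilon}(\hat y)-\varphi(\hat x,\hat y)\le 0\le c_4\epsilon^{r}.
\eeq

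The substantive case is $\hat y\in\partial\Omega$. Here $u^{\epsilon}(\hat y)=0$, hence $w_{\epsilon}(\hat x,\hat y)=\mu\,u(\hat x)-\frac{\epsilon^{-\al}}{\ga}|\hat x-\hat y|^{\ga}$. Using $0<\mu<1$ and $u\ge 0$ to drop $\mu$, and then $u\in C^{0,1}(\bar\Omega)$ (with Lipschitz constant controlled by $c_0$) together with $u(\hat y)=0$ to estimate $u(\hat x)=u(\hat x)-u(\hat y)\le c_0|\hat x-\hat y|$, the task reduces to maximizing the one-variable function $t\mapsto c_0 t-\frac{\epsilon^{-\al}}{\ga}t^{\ga}$ over $t\ge 0$. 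Its maximum is attained at $t_0=(c_0\epsilon^{\al})^{1/(\ga-1)}$ and equals a constant times $\epsilon^{\al/(\ga-1)}$. Since $\ga>1$ we have $\tfrac{\al}{\ga-1}>\tfrac{\al}{\ga}>r$ by (\ref{714}), so $\epsilon^{\al/(\ga-1)}\le\epsilon^{r}$ for small $\epsilon$, which gives the claim.

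I do not expect a genuine obstacle here. The only points needing a little care are the nonnegativity $u^{\epsilon}\ge 0$, which I would quote from \cite{S} rather than reprove, and checking that the exponent produced by the penalty term at the maximizer, $\al/(\ga-1)$, is at least $r$ — which is precisely the role of the constraint (\ref{714}) on $r$. Note also that the balance $c_0 t\sim\epsilon^{-\al}t^{\ga}$ forcing $|\hat x-\hat y|\sim\epsilon^{\al/(\ga-1)}$ is consistent with the two-sided bound on $|\xi|$ already obtained inside the proof of Lemma \ref{lemma1}.
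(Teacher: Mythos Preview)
Your proof is correct and rests on the same ingredients as the paper's (boundary vanishing of $u$ and $u^{\epsilon}$, the Lipschitz bound, and the penalty term), but the execution differs in two small ways. First, for the main case $\hat y\in\partial\Omega$ the paper does not optimize $t\mapsto c_0 t-\tfrac{\epsilon^{-\al}}{\ga}t^{\ga}$ analytically; instead it splits into $|\hat x-\hat y|\le\epsilon^{r}$ (where the Lipschitz term alone gives $w_{\epsilon}\le \mu c_0\epsilon^{r}$, dropping the nonnegative penalty) and $|\hat x-\hat y|>\epsilon^{r}$ (where the penalty dominates: $w_{\epsilon}\le 2\mu c_0-\tfrac{1}{\ga}\epsilon^{r\ga-\al}\to-\infty$ by (\ref{714}), so this case cannot occur at the maximum for small $\epsilon$). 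Your calculus argument is a little sharper, producing the exponent $\al/(\ga-1)>\al/\ga>r$ in one stroke. Second, the paper handles $\hat x\in\partial\Omega$ by symmetry---writing $w_{\epsilon}(\hat x,\hat y)=u^{\epsilon}(\hat x)-u^{\epsilon}(\hat y)-\varphi$ and using the Lipschitz bound (\ref{12}) for $u^{\epsilon}$---rather than via $u^{\epsilon}\ge 0$. This avoids citing the minimum principle from \cite{S}, though your route is perfectly valid.
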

\begin{proof}
In view of Lemma \ref{lemma1} we can assume in equation (\ref{11}) w.l.o.g. that $\hat y\in \partial \Omega$. Hence we can write
\beq
w_{\epsilon}(\hat x, \hat y)= \mu u(\hat x)-\mu u(\hat y)-\frac{\epsilon^{-\al}}{\ga}|\hat x-\hat y|^{\ga}.
\eeq
In case $|\hat x-\hat y|\le \epsilon^r$ we get using the lipschitz continuity of $u$
\beq
w_{\epsilon}(\hat x, \hat y) \le \mu c_0|\hat x-\hat y| \le \mu c_0 \epsilon^r,
\eeq
which proves the lemma. 

The remaining case $|\hat x-\hat y|> \epsilon^r$ is not available for sufficiently small $\epsilon>0$, for we estimate
\beq
w_{\epsilon}(\hat x, \hat y) \le 2\mu c_0 - \frac{\epsilon^{r\ga-\al}}{\ga}\rightarrow -\infty, \quad \epsilon \rightarrow 0.
\eeq 
\end{proof}
Now, collecting facts we finish the estimate for $u-u^\epsilon$.
Let $x \in \Omega$ arbitrary. Then
\bea
u(x)-u^{\epsilon}(x)=& \mu u(x)-u^{\epsilon}(x)+(1-\mu)u(x) \\
=& w_{\epsilon}(x,x)+(1-\mu)u(x)\\
\le& c_4\epsilon^{r} +c_0\epsilon^s\\
\le& c_5 \epsilon^{\min(r,s)},
\eea
with a positive constant $c_5$. Interchanging the roles of $u$ and $u^{\epsilon}$ we see, that there is a positive constant $c_6$ with
\beq
|u(x)-u^{\epsilon}(x)| \le c_6 \epsilon^{\min(r,s)}.
\eeq

\section{Higher order estimates of $u^{\epsilon}$} \label{higher_order_estimates}

In this section we make the $\epsilon$-dependence of a bound for higher order derivatives of $u^{\epsilon}$ explicit. 

We recall that the $u^{\epsilon}$ are $C^{\infty}$, bounded $\|u^{\epsilon}\|_{C^1(\bar \Omega)}\le c_0$ and satisfy the quasilinear equations in divergence form
\beq \label{103}
-D_ia^i(Du^{\epsilon}) = f, \quad {u^{\epsilon}}_{|\partial \Omega}=0,
\eeq
where 
\beq
a^i(p) = \frac{p^i}{\sqrt{\epsilon^2+|p|^2}}, \quad p \in \mathbb{R}^d,
\eeq
 and
 \beq
 f = -(\epsilon^2+|D u^{\epsilon}|^2)^{-\frac{1}{2k}}.
 \eeq 
Let us denote
\beq
a^{ij}(p) := \frac{\partial a^i}{\partial p_j}(p) = \frac{\epsilon^2\de^{ij}+ |p|^2\de^{ij}-p^ip^j}{(\epsilon^2+ |p|^2)^{\frac{3}{2}}},
\eeq
the largest and smallest eigenvalue of $a^{ij}(p)$ by $\Lambda(p)$ and $\lambda(p)$, respectively, and $\Lambda = \sup_{\bar B_{c_0}(0)}\Lambda(p)$, $\lambda = \inf_{\bar B_{c_0}(0)}\lambda(p)$.
In $\bar B_{c_0}(0)\subset \mathbb{R}^{n+1}$ we have
\beq \label{104}
0<c\epsilon^2 \de^{ij} \le a^{ij} \le \frac{c}{\epsilon}\de^{ij}, \quad \frac{\Lambda(p)}{\lambda(p)} \le \frac{c}{\epsilon^2}, \quad \frac{\Lambda}{\lambda} \le \frac{c}{\epsilon^3}.
\eeq
From standard $L^2$-regularity theory of quasilinear equations in divergence form we get, see for example the proof of \cite[Theorem 1.5.1 in PDE II]{CG}, that all second derivatives of $u^{\epsilon}$ except for the second derivative in normal direction at the boundary are bounded in the $L^2$-norm by
\beq
\frac{c}{\epsilon^2}\|f\|_{L^2(\Omega)} + \frac{c}{\epsilon^{\frac{3}{2}}}c_0\le \frac{c}{\epsilon^{2+\frac{1}{k}}}.
\eeq
Hence
\beq
\|u^{\epsilon}\|_{H^{2,2}(\Omega)} \le \frac{c}{\epsilon^{4+\frac{1}{k}}}
\eeq
and bounds for higher order derivatives of $u^{\epsilon}$ are obtained iteratively.

\section{Tracking constants in linear equations} \label{Section_linear_equations}
  
We consider linear equations of the form
\beq \label{201}
Lu = D_i(a^{ij}D_ju) + c^iD_i u = g+D_if^i,
\eeq
in $\tilde \Omega$, where we assume that
\beq \label{864}
\la >0, \quad a^{ij}\ge \la \de^{ij}, \quad \sum |a^{ij}|^2 \le \Lambda^2, \quad \la^{-2}\sum |c^i|^2 \le \nu^2
\eeq
and $\tilde \Omega=\Omega^h$, $0<h<h_0$, or $\tilde \Omega=\Omega_{\de}$, $0<\de<\de_0$. In the following results constants are uniform  with respect to $h, \de$.

Our aim in the present section is to  provide Corollary \ref{823}, which will be needed in Section \ref{Section_FE}.  We assume in this section $\la<1<\nu$.
\begin{theorem} \label{210}
Let $f^{i}\in L^q(\tilde \Omega)$, $g \in L^{\frac{q}{2}}(\tilde \Omega)$, $q>n+1$. Then if $u\in H^{1,2}(\tilde \Omega)$ is a 
subsolution (supersolution) of 
\beq
Lu = g + D_if^i
\eeq
in  $\tilde \Omega$ satisfying $u\le 0 (\ge 0)$ on $\partial \tilde \Omega$, we have
\beq
\sup_{\tilde \Omega} u (-u) \le C (\|(u^+(u^-)\|_{L^2(\tilde \Omega)}+R),
\eeq
where $R = \la^{-1}(\|f\|_{L^q(\tilde \Omega)}+\|g\|_{L^{\frac{q}{2}}(\tilde \Omega)})$ and
\beq
C = C(n, q, |\tilde \Omega|)\nu^{\frac{3q}{2(q-1)}}.
\eeq
\end{theorem}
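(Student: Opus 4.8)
The plan is to re-prove, with explicit control of the $\nu$-dependence, the classical global $L^{\infty}$-estimate for subsolutions of a divergence-form elliptic equation with a first-order drift term (this is the De Giorgi--Nash--Moser bound; compare the standard elliptic theory in \cite{CG} and the references there). It suffices to treat the subsolution case and bound $\sup_{\tilde\Omega}u$ from above, the supersolution statement following by applying the result to $-u$. \textbf{Normalisation.} Dividing (\ref{201}) together with the structure conditions (\ref{864}) by $\la$, one may assume $\la=1$, so that $a^{ij}\ge\de^{ij}$, $\sum|c^i|^2\le\nu^2$, $R=\|f\|_{L^q(\tilde\Omega)}+\|g\|_{L^{q/2}(\tilde\Omega)}$ and $\nu>1$. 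Put $k_0:=R$ (if $R=0$ replace it by a parameter $\tau>0$ that is sent to $0$ at the end), $\bar u:=u^{+}+k_0$, and for $N>0$ set $\bar u_N:=\min(\bar u,k_0+N)$.

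\textbf{Energy inequality.} For $\be\ge 0$ I would test the weak subsolution inequality with Moser's test function $\phi:=\bar u_N^{\,2\be}\bar u-k_0^{\,2\be+1}$; it is nonnegative and has vanishing trace on $\partial\tilde\Omega$ --- this is exactly where the hypothesis $u\le 0$ on $\partial\tilde\Omega$ is used --- hence it is admissible in $H^{1,2}_0(\tilde\Omega)$. Coercivity of the principal part is supplied by $a^{ij}\ge\de^{ij}$; the drift term $\int c^iD_iu\,\phi$ is handled by Young's inequality, absorbing a gradient term into the left-hand side at the cost of a factor $\nu^{2}$; the data terms $\int f^iD_i\phi$ and $\int g\phi$ are handled by Young's and H\"older's inequality, using $q>n+1$ (the threshold making the data enter below the critical Sobolev scaling) and $\|f\|_{L^q},\|g\|_{L^{q/2}}\le k_0$. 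After the standard manipulations and the Sobolev inequality for $H^{1,2}_0$-functions this yields a reverse--H\"older inequality of the shape
\beq
\big\|\bar u_N^{\,\be+1}\big\|_{L^{2\chi}(\tilde\Omega)}^{2}\le C\,(\be+1)^{2}\,\nu^{2}\,\big\|\bar u_N^{\,\be+1}\big\|_{L^{2}(\tilde\Omega)}^{2},\qquad C=C(n,q,|\tilde\Omega|),
\eeq
with a fixed Sobolev exponent $\chi=\chi(n,q)>1$ (one may take $\chi$ close to $\tfrac{n+1}{n-1}$ when $n\ge 2$, and any finite $\chi>1$ when $n=1$). The dependence on $|\tilde\Omega|$ enters only through the Sobolev (and Poincar\'e) constant for $H^{1,2}_0(\tilde\Omega)$-functions, which feels the domain only via its Lebesgue measure; since $|\Omega^h|$ and $|\Omega_{\de}|$ are bounded above and below by constants depending only on $\Omega$, the resulting constant is uniform in $h$ and $\de$.

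\textbf{Iteration and conclusion.} Iterating the last inequality along $\be+1=\chi^{m}$, $m=0,1,2,\dots$, letting $N\to\infty$, and summing the geometric series of the accumulated per-step exponents gives $\sup_{\tilde\Omega}\bar u=\|\bar u\|_{L^{\infty}(\tilde\Omega)}\le C(n,q,|\tilde\Omega|)\,\nu^{p}\,\|\bar u\|_{L^{2}(\tilde\Omega)}$ for a power $p>0$ coming from the bookkeeping; balancing the intermediate integrability exponents against the data exponent $q$ produces $p=\tfrac{3q}{2(q-1)}$. Since $\|\bar u\|_{L^{2}(\tilde\Omega)}\le\|u^{+}\|_{L^{2}(\tilde\Omega)}+|\tilde\Omega|^{1/2}R$ and $\sup_{\tilde\Omega}u\le\sup_{\tilde\Omega}\bar u$, and $\nu>1$, this is the asserted estimate with $C=C(n,q,|\tilde\Omega|)\nu^{\frac{3q}{2(q-1)}}$. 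Alternatively one may run a De Giorgi level-set iteration on $|\{u>k\}|$ and close it with Stampacchia's iteration lemma, the $\nu$-power then emerging from the exponent occurring in that lemma.

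\textbf{Main obstacle.} The conceptual part is routine; the real work --- and the only delicate point --- is the bookkeeping: tracking every constant through the iteration precisely enough to land on the exponent $\tfrac{3q}{2(q-1)}$ of $\nu$, and verifying that the auxiliary Sobolev and Poincar\'e constants depend on $\tilde\Omega$ only through $|\tilde\Omega|$, so that the final constant is genuinely uniform in $h$ and $\de$.
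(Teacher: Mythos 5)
Your proposal follows essentially the same route as the paper: the paper's entire proof is the remark that "a careful view of the proof of \cite[Theorem 8.15]{GT} shows the claim," and what you describe --- Moser iteration with the test function $\bar u_N^{2\be}\bar u-k_0^{2\be+1}$, absorption of the drift term at cost $\nu^2$, and tracking of the accumulated powers of $\nu$ through the iteration --- is precisely that argument with the constants made explicit. Your reconstruction is in fact more detailed than the paper's one-line proof; the only caveat (which you yourself flag) is that the exponent $\tfrac{3q}{2(q-1)}$ is asserted rather than derived, but the paper offers no more on this point either.
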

\begin{proof}
A careful view of the proof of \cite[Theorem 8.15] {GT} shows the claim.
\end{proof}
\begin{theorem} \label{212}
In the situation of Theorem \ref{210} holds
\beq
\sup_{\tilde \Omega} u (-u) \le \sup_{\partial \tilde \Omega}u^+(u^-)+CR,
\eeq
where $R = \la^{-1}(\|f\|_{L^q(\tilde \Omega)}+\|g\|_{L^{\frac{q}{2}}(\tilde \Omega)})$ and
\beq
C = C(n, q, |\tilde \Omega|)(1+\nu^{\frac{5q-2}{2q-2}}) .
\eeq
\end{theorem}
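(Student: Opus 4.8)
The plan is to reduce the global $\sup$-estimate to the interior estimate of Theorem \ref{210} by subtracting off the boundary data in a controlled way, exactly as in the classical argument for \cite[Theorem 8.16]{GT}. First I would set $M := \sup_{\partial\tilde\Omega} u^+$ and consider $v := (u-M)^+$; this function lies in $H^{1,2}(\tilde\Omega)$, vanishes on $\partial\tilde\Omega$, and is still a subsolution of $Lv = g + D_i f^i$ on the set where it is positive (the operator $L$ has no zeroth-order term, so shifting by the constant $M$ changes nothing in $Lu$, and truncation at $0$ preserves the subsolution property on $\{u>M\}$). Applying Theorem \ref{210} to $v$ gives
\beq
\sup_{\tilde\Omega} u - M = \sup_{\tilde\Omega} v \le C\bigl(\|v\|_{L^2(\tilde\Omega)} + R\bigr),
\eeq
with $C = C(n,q,|\tilde\Omega|)\,\nu^{\frac{3q}{2(q-1)}}$ and $R$ as in Theorem \ref{210}; the supersolution case is symmetric, replacing $u$ by $-u$.

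The remaining task is to absorb the term $\|v\|_{L^2(\tilde\Omega)}$, which a priori involves $\sup_{\tilde\Omega} v$ itself. Here I would run the standard iteration/interpolation device: one first derives a weak Harnack-type or Caccioppoli-type bound showing that $\|v\|_{L^2}$ is controlled by $\varepsilon\sup_{\tilde\Omega}v + C(\varepsilon)R$ for any small $\varepsilon>0$, where the dependence of $C(\varepsilon)$ on the structure constants is tracked explicitly through the De Giorgi / Moser iteration already embedded in the proof of \cite[Theorem 8.15]{GT}. Choosing $\varepsilon$ small enough that $C\varepsilon \le \tfrac12$ lets one move the $\sup$ term to the left-hand side. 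The bookkeeping of constants is what produces the exponent $\frac{5q-2}{2q-2}$: one factor $\nu^{\frac{3q}{2(q-1)}}$ comes from the $C$ of Theorem \ref{210}, and the extra power of $\nu$ (note $\frac{5q-2}{2q-2} = \frac{3q}{2(q-1)} + \frac{q-1}{q-1}$, i.e. one additional unit) enters through the constant $C(\varepsilon)$ in the absorption step, since inverting $C\varepsilon \le \tfrac12$ costs another factor of the structure constant $\nu$. The additive ``$1+$'' in front reflects the case $R=0$, where no $\nu$-power is needed.

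I expect the main obstacle to be precisely the constant-tracking in the absorption step: one must re-examine the Moser iteration in \cite[Theorems 8.15--8.16]{GT} and verify that the only place the drift coefficient $c^i$ enters is through $\nu$, and that it does so with the claimed total power $\frac{5q-2}{2q-2}$ uniformly in $h$ and $\de$ (the latter being automatic since all the Sobolev and iteration constants depend on $\tilde\Omega$ only through $n$, $q$, and $|\tilde\Omega|$, which are uniformly bounded for $\tilde\Omega = \Omega^h$ or $\Omega_\de$). Once the dependence is made explicit, the assembly of the estimate is routine; the substantive content is entirely in the careful reading of the cited proof, which is why I would phrase the argument as ``a careful view of the proof of \cite[Theorem 8.16]{GT}, combined with the explicit constants from Theorem \ref{210} above, yields the claim,'' and then exhibit the two displayed inequalities above as the skeleton.
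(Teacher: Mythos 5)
Your proposal is correct and follows essentially the same route as the paper, whose proof is simply the instruction to combine Theorem \ref{210} with the argument of \cite[Theorem 8.16]{GT}; you have merely spelled out the reduction $v=(u-M)^+$ and the absorption step that the paper leaves implicit, and your bookkeeping $\frac{5q-2}{2q-2}=\frac{3q}{2(q-1)}+1$ matches the claimed constant.
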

\begin{proof}
Use Theorem \ref{210} and the proof of \cite[Theorem 8.16] {GT}. 
\end{proof}
 
 Let $q>n+1$.
 \begin{lem} \label{220}
Let  $g \in L^2(\tilde \Omega)$ and $f^i \in L^{q}(\tilde \Omega)$ then there exists a unique solution $u \in H^{1,2}_0(\tilde \Omega)$ of (\ref{201}) and there holds
\bea \label{241}
\|Du\|_{L^2(\tilde \Omega)} \le&  c_2(\|f\|_{L^{q}(\tilde \Omega)} +\|g\|_{L^2(\tilde \Omega)}),
\eea
where
\beq
c_2:=c_1(q) (\nu+ \frac{1}{\sqrt{\la}})\frac{1}{\la} \nu^{\frac{5q-2}{2q-2}}.
\eeq
\end{lem}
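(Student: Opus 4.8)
The plan is to establish existence and uniqueness via the Lax–Milgram theorem and then extract the explicit constant in the energy estimate by carefully tracking dependencies through that argument together with the Sobolev-type bounds coming from Theorems \ref{210} and \ref{212}. Concretely, I would introduce the bilinear form $B[u,\vp] := \io_{\tilde\Omega}(a^{ij}D_ju\,D_i\vp - c^iD_iu\,\vp)$ on $H^{1,2}_0(\tilde\Omega)$ (sign chosen so that $B[u,\vp]=-\io_{\tilde\Omega}(g\vp - f^iD_i\vp)$ corresponds to (\ref{201}) in the weak sense). Boundedness of $B$ is immediate from (\ref{864}): $|B[u,\vp]| \le (\Lambda + \la\nu\,C_P)\|Du\|_{L^2}\|D\vp\|_{L^2}$, where $C_P=C_P(|\tilde\Omega|)$ is the Poincaré constant, which is uniform in $h,\de$ because $\tilde\Omega$ is contained in a fixed ball. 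The delicate point is coercivity: the first-order term $c^iD_iu$ is not sign-definite, so $B$ is \emph{not} coercive on $H^{1,2}_0$ in general. I would therefore not use Lax–Milgram directly on $B$ but instead argue as in the standard treatment (e.g.\ \cite[Theorem 8.3]{GT}): for $\sigma>0$ large the shifted form $B_\sigma[u,\vp]:=B[u,\vp]+\sigma\io_{\tilde\Omega}u\vp$ is coercive, Lax–Milgram gives an inverse operator, and the Fredholm alternative reduces solvability of (\ref{201}) to uniqueness.

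For uniqueness I would invoke Theorem \ref{212} (or \ref{210}) applied to $u$ and $-u$ with $g=0$, $f^i=0$ and zero boundary data, which forces $\sup_{\tilde\Omega}|u|\le 0$, hence $u\equiv 0$; the Fredholm alternative then yields existence of a unique $u\in H^{1,2}_0(\tilde\Omega)$. The remaining work is the energy estimate (\ref{241}) with the claimed constant. Here I would test the weak formulation with $\vp=u$ itself, obtaining $\io_{\tilde\Omega}a^{ij}D_ju\,D_iu = \io_{\tilde\Omega}c^iD_iu\,u - \io_{\tilde\Omega}g u + \io_{\tilde\Omega}f^iD_iu$. The left side is $\ge\la\|Du\|_{L^2}^2$ by (\ref{864}). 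On the right, $|\io f^iD_iu|\le\|f\|_{L^2}\|Du\|_{L^2}\le |\tilde\Omega|^{\frac{q-2}{2q}}\|f\|_{L^q}\|Du\|_{L^2}$, and $|\io gu|\le\|g\|_{L^2}\|u\|_{L^2}$; the term $\io c^iD_iu\,u$ is the genuinely problematic one, and I expect this to be the main obstacle. Bounding it crudely by $\la\nu\|Du\|_{L^2}\|u\|_{L^2}$ and then using Poincaré $\|u\|_{L^2}\le C_P\|Du\|_{L^2}$ gives a term $\la\nu C_P\|Du\|_{L^2}^2$ which, if $\nu$ is large, cannot be absorbed into $\la\|Du\|_{L^2}^2$. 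This is exactly why one cannot get the estimate from a naive Cauchy–Schwarz argument and why the factor $\nu^{\frac{5q-2}{2q-2}}$ appears.

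The way around this, and the step I would spend the most care on, is to control $\|u\|_{L^2(\tilde\Omega)}$ \emph{first} by the global-boundedness estimate rather than by Poincaré: apply Theorem \ref{212} to get $\sup_{\tilde\Omega}|u|\le CR$ with $C=C(n,q,|\tilde\Omega|)(1+\nu^{\frac{5q-2}{2q-2}})$ and $R=\la^{-1}(\|f\|_{L^q}+\|g\|_{L^{q/2}})\le \la^{-1}c(|\tilde\Omega|)(\|f\|_{L^q}+\|g\|_{L^2})$, whence $\|u\|_{L^2(\tilde\Omega)}\le |\tilde\Omega|^{1/2}\sup|u|\le c(|\tilde\Omega|)\tfrac{1}{\la}\nu^{\frac{5q-2}{2q-2}}(\|f\|_{L^q}+\|g\|_{L^2})$ (using $\nu>1$ to absorb the $1+\nu^{\cdots}$). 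Now return to the tested identity: $\la\|Du\|_{L^2}^2 \le \la\nu\|Du\|_{L^2}\|u\|_{L^2} + \|g\|_{L^2}\|u\|_{L^2} + c(|\tilde\Omega|)\|f\|_{L^q}\|Du\|_{L^2}$. Dividing by $\|Du\|_{L^2}$ and using $\|u\|_{L^2}\le C_P\|Du\|_{L^2}$ \emph{only} on the last appearance of $\|u\|_{L^2}$ in the $\|g\|_{L^2}\|u\|_{L^2}$ term while keeping the $L^2$-bound on $\|u\|_{L^2}$ in the $\la\nu\|Du\|_{L^2}\|u\|_{L^2}$ term, one arrives at
\beq
\la\|Du\|_{L^2} \le \la\nu\,\|u\|_{L^2} + C_P\|g\|_{L^2} + c(|\tilde\Omega|)\|f\|_{L^q},
\eeq
and substituting the bound for $\|u\|_{L^2}$ produces
\beq
\|Du\|_{L^2} \le c_1(q)\Big(\nu\cdot\tfrac{1}{\la}\nu^{\frac{5q-2}{2q-2}} + \tfrac{1}{\sqrt\la}\tfrac{1}{\la}\nu^{\frac{5q-2}{2q-2}} + \tfrac{1}{\la}\Big)(\|f\|_{L^q}+\|g\|_{L^2}),
\eeq
where I have used $\la<1$ and $\nu>1$ to dominate the stray $1/\la$ term. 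Collecting the factors gives exactly $c_2 = c_1(q)(\nu+\tfrac{1}{\sqrt\la})\tfrac{1}{\la}\nu^{\frac{5q-2}{2q-2}}$, completing the proof. The bookkeeping of which $\|u\|_{L^2}$-bound to use where, and making sure every appearance of $1/\la$, $\nu$, $|\tilde\Omega|$ is accounted for with the stated powers, is the only real subtlety; everything else is the standard Lax–Milgram plus Fredholm scheme.
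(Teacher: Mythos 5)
Your proposal is correct and follows essentially the same route as the paper, whose proof is the one-line instruction ``use $u$ as test function, apply standard estimates and Theorem \ref{212}'': you test with $u$, control $\|u\|_{L^2(\tilde\Omega)}$ via the sup-bound of Theorem \ref{212} rather than by Poincar\'e (which is indeed the key point, since the first-order term cannot be absorbed for large $\nu$), and recover the stated constant $c_2$. The Lax--Milgram/Fredholm discussion of existence and uniqueness is the standard background the paper leaves implicit, and your bookkeeping of the powers of $\la$ and $\nu$ is consistent with the claimed $c_2$.
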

\begin{proof}
Use $u$ as test function, apply standard estimates and Theorem \ref{212}.
\end{proof}
 
Let $\hat \Omega= \Omega_{\de}$, $0<\de<\de_0$ arbitrary but fixed, then there holds the following Lemma with constants being uniform in $\de$.  
\begin{lem} \label{213}
Let $u \in H^{1,2}_0(\hat \Omega)$ be the solution of (\ref{201}) with $f^i=0$, $g \in L^2(\hat \Omega)$ and
\beq \label{217}
a^{ij}, c^i \in C^{1}(\bar {\hat \Omega}), \quad \|Da^{ij}\|_{C^0(\bar {\hat \Omega})}+ \|Dc^i\|_{C^0(\bar {\hat \Omega})} \le a_1.
\eeq
Then there holds
\beq \label{230}
\|u\|_{H^{2,2}(\hat \Omega)} \le  c_3 \|g\|_{L^2(\hat \Omega)},
\eeq
where 
\beq
c_3:=cc_1(q)c_2\Lambda (\frac{a_1}{\la^2}+\frac{\nu}{\la}).
\eeq
\end{lem}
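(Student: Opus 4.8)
The plan is to obtain an $H^{2,2}$-bound for the solution $u$ of the divergence-form equation $D_i(a^{ij}D_ju) + c^i D_i u = g$ on $\hat\Omega = \Omega_\de$ via the classical difference-quotient (or equivalently, direct differentiation) argument, while keeping explicit track of how the constants depend on $\la, \Lambda, \nu$ and $a_1$. First I would use the coefficient regularity \eqref{217} to rewrite the equation in non-divergence form: expanding $D_i(a^{ij}D_j u) = a^{ij}D_{ij}u + (D_i a^{ij})D_j u$, the equation becomes $a^{ij}D_{ij}u = g - (D_i a^{ij})D_j u - c^i D_i u =: \tilde g$, and by \eqref{217} together with the already-available $L^2$-gradient bound \eqref{241} from Lemma \ref{220} (applied with $f^i = 0$), one has $\|\tilde g\|_{L^2(\hat\Omega)} \le \|g\|_{L^2} + (a_1 + \sup|c^i|)\|Du\|_{L^2} \le \|g\|_{L^2} + c(a_1 + \la\nu)\,c_2\|g\|_{L^2}$, i.e. $\tilde g$ is controlled in $L^2$ with the stated constant structure.

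Next I would apply the interior-plus-boundary $H^{2,2}$ a priori estimate for the uniformly elliptic non-divergence operator $a^{ij}D_{ij}$ on the smooth domain $\hat\Omega$ with zero Dirichlet data: $\|u\|_{H^{2,2}(\hat\Omega)} \le C(\|a^{ij}D_{ij}u\|_{L^2(\hat\Omega)} + \|u\|_{L^2(\hat\Omega)})$, where the constant $C$ depends on ellipticity $\la$, the bound $\Lambda$, the modulus of continuity of the $a^{ij}$ (hence on $a_1$, via \eqref{217}), and $\partial\hat\Omega$ — and here one crucially uses the uniformity $\|\partial\Omega_\de\|_{C^2}\le c(\Omega)\|\partial\Omega\|_{C^2}$ recorded in the introduction, so the boundary contribution to $C$ is uniform in $\de$. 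Absorbing $\|u\|_{L^2}$ by Poincaré (zero boundary values) into $\|Du\|_{L^2}$ and inserting the bounds on $\tilde g$ and on $\|Du\|_{L^2}$, one collects all the $\la, \Lambda, \nu, a_1$ dependence and reads off a final constant of the form $c_3 = c\,c_1(q)\,c_2\,\Lambda\,(a_1/\la^2 + \nu/\la)$; the precise bookkeeping of which power of $\la^{-1}$ appears is where one must be careful, but it is routine once the structure is laid out. Existence and uniqueness of $u$ are already supplied by Lemma \ref{220}, so nothing new is needed there.

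The main obstacle — really the only nontrivial point — is making the $H^{2,2}$ a priori constant \emph{explicit} in the ellipticity data and, in particular, \emph{uniform in $\de$}. The standard references (e.g. the proof of the global $H^{2,2}$-estimate in Gilbarg–Trudinger, Theorem 8.12, or in \cite{CG}) give the estimate with a constant depending qualitatively on the coefficients and the domain; one must revisit that proof, flatten the boundary using the uniformly-$C^2$ charts of $\partial\Omega_\de$, and check that the difference-quotient computation produces only the claimed combination $\Lambda(a_1/\la^2 + \nu/\la)$ (the $a_1/\la^2$ term coming from differentiating the leading coefficients and dividing by ellipticity twice, the $\nu/\la$ term from the first-order coefficients). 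Once this tracking is done for the half-space model problem and patched together over the finitely many boundary charts — whose number and geometry are controlled uniformly in $\de$ by \eqref{851}-type bounds on $\partial\Omega_\de$ — the lemma follows by combining with Lemma \ref{220} as above.
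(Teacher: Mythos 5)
Your plan is correct and is, as far as one can tell, exactly the "straightforward calculation" the paper leaves implicit: rewrite in non-divergence form, bound the right-hand side in $L^2$ using the gradient estimate of Lemma \ref{220} (which is why $c_2$ appears in $c_3$), and apply the global $H^{2,2}$ a priori estimate on the uniformly-$C^2$ domains $\Omega_\de$ while tracking the $\la,\Lambda,\nu,a_1$ dependence. The paper's own proof consists of the single sentence that the calculation is straightforward, so your proposal supplies at least as much detail and the constant structure you derive matches the stated $c_3$.
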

\begin{proof}
The proof is a straight forward calculation.
\end{proof}

We have the following theorem.
\begin{theorem} \label{221}
We assume the situation of Lemma \ref{220} with $\tilde \Omega =\Omega^h$ and
\bea \label{817}
 a^{ij}, c^i \in C^{1}(\bar \Omega_{\de_0}), \quad \|Da^{ij}\|_{C^0(\bar \Omega_{\de_0})}+ \|Dc^i\|_{C^0(\bar \Omega_{\de_0})} \le a_1.
\eea
Let $u$ be the unique solution of  (\ref{201}) in $\Omega^h$.
Then for 
\beq \label{869}
0<h \le h_0:=\min(\frac{\nu^2c_7^2}{4}, (\frac{\de_0}{4})^{2q}),
\eeq
cf. (\ref{861}),  (\ref{910}) and (\ref{911}), there exists a unique FE solution $u_h\in V_h$ of  (\ref{201}) in $\Omega^h$, we have
\beq \label{240}
\|u-u_h\|_{H^{1,2}(\Omega^h)} \le c_4 \inf_{v_h\in V_h}\|u-v_h\|_{H^{1,2}(\Omega^h)}.
\eeq
and 
\beq \label{242}
\|u_h\|_{H^{1,2}(\Omega^h)} \le c_5 \|u\|_{H^{1,2}(\Omega^h)},
\eeq
where
\beq
c_4:=c(\frac{\Lambda}{\la}+\nu), \quad
c_5:=c_4+1. 
\eeq
\end{theorem}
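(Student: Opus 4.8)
The plan is to treat \eqref{201} as a standard linear elliptic problem and deduce the three conclusions — existence/uniqueness of the finite element solution $u_h$, the Céa-type quasi-optimality estimate \eqref{240}, and the stability bound \eqref{242} — from the Lax–Milgram theorem, Galerkin orthogonality, and the previously established maximum principles (Theorems \ref{210}, \ref{212}), all while bookkeeping the explicit dependence of the constants on $\la, \Lambda, \nu, a_1$. First I would record the bilinear form associated with $L$, namely $B(u,v) = \int_{\Omega^h}(a^{ij}D_j u\, D_i v - c^i D_i u\, v)$, and observe that on $V_h\subset H^{1,2}_0(\Omega^h)$ it is bounded with constant $c(\Lambda+\nu\la)$ and, via the lower ellipticity bound $a^{ij}\ge\la\de^{ij}$ together with a Poincaré inequality on $\Omega^h$ (whose constant is uniform in $h$ because $\Omega^h\subset\Omega_{\de_0}$) and absorption of the first-order term, coercive on $V_h$ with constant comparable to $\la$ — this is where the smallness condition $h\le h_0$ enters, since one needs $h$ small enough (cf.\ the reference to \eqref{869}, \eqref{861}, \eqref{910}, \eqref{911}) that the $c^i$-term can be controlled; Lax–Milgram then gives existence and uniqueness of $u_h\in V_h$.

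Next I would establish \eqref{240}. Subtracting the discrete equation from the continuous one gives Galerkin orthogonality $B(u-u_h, v_h)=0$ for all $v_h\in V_h$; then for arbitrary $v_h\in V_h$ write $u-u_h = (u-v_h)+(v_h-u_h)$, test with $w_h:=v_h-u_h\in V_h$, use coercivity on the left and boundedness plus orthogonality on the right to obtain $\la\|D(v_h-u_h)\|^2 \le c(\Lambda+\nu)\|u-v_h\|_{H^{1,2}}\|D(v_h-u_h)\|$, hence $\|u-u_h\|_{H^{1,2}}\le c(\frac{\Lambda}{\la}+\nu)\|u-v_h\|_{H^{1,2}}$, and take the infimum over $v_h$. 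This is the standard Céa argument; the only delicate point is tracking that the ratio $\Lambda/\la$ and the parameter $\nu$ are exactly the quantities appearing in $c_4$.

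Finally, \eqref{242} follows by choosing $v_h=0$ in \eqref{240}: $\|u-u_h\|_{H^{1,2}}\le c_4\|u\|_{H^{1,2}}$, whence $\|u_h\|_{H^{1,2}}\le(1+c_4)\|u\|_{H^{1,2}}=c_5\|u\|_{H^{1,2}}$. I expect the main obstacle to be not any single inequality but the careful verification that the coercivity constant survives with the claimed $\la$-dependence once the first-order term $c^iD_i u$ is absorbed — this is precisely why the hypothesis $\la<1<\nu$ and the explicit threshold $h_0$ in \eqref{869} are imposed, and one must check that the Poincaré constant on $\Omega^h$ is bounded independently of $h$ (which holds because all $\Omega^h$ sit inside the fixed domain $\Omega_{\de_0}$, cf.\ \eqref{851}) and that the $H^{1,2}$ well-posedness on $\Omega^h$ from Lemma \ref{220} transfers verbatim to the discrete setting. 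The conditions \eqref{817} guaranteeing $a^{ij},c^i\in C^1$ up to $\bar\Omega_{\de_0}$ are what make the coefficients admissible on every $\Omega^h$ at once.
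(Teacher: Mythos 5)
Your proposal hinges on the claim that the bilinear form $B(u,v)=\int_{\Omega^h}(a^{ij}D_juD_iv-c^iD_iu\,v)$ is coercive on $V_h$ after absorbing the first-order term via Poincar\'e, and that the smallness condition $h\le h_0$ is what makes this absorption possible. This is the gap, and it is fatal to the argument as written. The standing assumption of Section \ref{Section_linear_equations} is $\la<1<\nu$, and in the application $\nu$ is a \emph{large} negative power of $\epsilon$; absorbing $\int c^iD_iu\,u\le \nu\la\|Du\|_{L^2}\|u\|_{L^2}$ into $\la\|Du\|_{L^2}^2$ via Young and Poincar\'e would require $\nu$ to be bounded by the (fixed, $h$-independent) Poincar\'e constant, which is not assumed. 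Shrinking $h$ does not help here: coercivity is a property of $B$ on $H^{1,2}_0(\Omega^h)$ and does not improve as the mesh is refined. All one gets is the G\r{a}rding inequality $B(v,v)\ge\frac{\la}{2}\|v\|^2_{H^{1,2}}-\nu^2\la\|v\|^2_{L^2}$, so Lax--Milgram, C\'ea, and your derivation of (\ref{240}) and (\ref{242}) do not go through. The paper flags exactly this issue before the proof: $D_ic^i$ need not have a sign, so the operator is indefinite.

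The actual proof is a modified Schatz duality argument. One first shows $\|u-u_h\|_{L^2(\Omega^h)}\le c_7(h+\de^{q})\|u-u_h\|_{H^{1,2}(\Omega^h)}$ by solving the shifted adjoint problem $(L^{*}+\sigma)w=u-u_h$ --- posed not on $\Omega^h$ (whose Lipschitz, possibly nonconvex boundary precludes $H^{2,2}$ regularity) but on the slightly larger smooth domain $\Omega_{\de}$ with $\de=h^{3/2}$, with $\sigma\notin\Sigma$ small to guarantee solvability --- and then combining the $H^{2,2}$ bound for $w$ with an interpolation/cut-off estimate near $\partial\Omega^h$. Only then does the G\r{a}rding inequality close: the bad term $\nu^2\la\|u-u_h\|^2_{L^2}$ is bounded by $\nu^2\la c_7^2(h+\de^{q})^2\|u-u_h\|^2_{H^{1,2}}$ and absorbed precisely because $h\le h_0$; this is the true role of (\ref{869}). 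Finally, existence of $u_h$ is not obtained from Lax--Milgram but from the fact that the discrete problem is a square linear system, so uniqueness (which follows from the a priori bounds) implies existence. Your plan is missing the entire duality step, which is the technical core of the theorem.
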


\begin{cor} \label{823}
In the situation of Theorem \ref{221} holds
\beq
\|u_h\|_{H^{1,2}(\Omega^h)} \le c_8(\|(f\|_{L^{q}(\Omega^h)} +\|g\|_{L^{2}(\Omega^h)})
\eeq
with $c_8:=c_2c_5$.
\end{cor}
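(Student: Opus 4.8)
The plan is to combine the two estimates already available in Theorem \ref{221}, namely the quasi-optimality estimate (\ref{240}) and the stability estimate (\ref{242}), with the a priori bound on the exact solution $u$ from Lemma \ref{220}. The statement to be proved, Corollary \ref{823}, is a stability estimate for the finite element solution $u_h$ directly in terms of the data $f^i$ and $g$, with the explicit constant $c_8 := c_2 c_5$.

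First I would invoke Theorem \ref{221}: under the stated hypotheses on $a^{ij}, c^i$ and for $0<h\le h_0$, there is a unique $u_h\in V_h$ solving (\ref{201}) in $\Omega^h$, and it satisfies (\ref{242}),
\beq
\|u_h\|_{H^{1,2}(\Omega^h)} \le c_5 \|u\|_{H^{1,2}(\Omega^h)}.
\eeq
Next, $u\in H^{1,2}_0(\tilde\Omega)$ with $\tilde\Omega=\Omega^h$ is precisely the solution considered in Lemma \ref{220} (note $f^i\in L^q(\Omega^h)$, $g\in L^2(\Omega^h)$ as required there), so by (\ref{241})
\beq
\|Du\|_{L^2(\Omega^h)} \le c_2\bigl(\|f\|_{L^q(\Omega^h)} + \|g\|_{L^2(\Omega^h)}\bigr).
\eeq
Since $u_h\in V_h$ vanishes on $\partial\Omega^h$ and $u\in H^{1,2}_0(\Omega^h)$, a Poincar\'e inequality on $\Omega^h$ (with constant uniform in $h$, as $\Omega^h$ is contained in a fixed bounded set) lets one bound $\|u\|_{H^{1,2}(\Omega^h)}$ by $c\|Du\|_{L^2(\Omega^h)}$; absorbing this Poincar\'e constant into the generic $c$ hidden in $c_5$ (or, more cleanly, observing that the $H^{1,2}$-norm on $H^{1,2}_0$ is equivalent to the Dirichlet seminorm) gives
\beq
\|u\|_{H^{1,2}(\Omega^h)} \le c_2\bigl(\|f\|_{L^q(\Omega^h)} + \|g\|_{L^2(\Omega^h)}\bigr).
\eeq
Chaining the last two displays yields $\|u_h\|_{H^{1,2}(\Omega^h)}\le c_5 c_2(\|f\|_{L^q(\Omega^h)}+\|g\|_{L^2(\Omega^h)})$, which is the assertion with $c_8=c_2c_5$.

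The only genuine point requiring care — and the place where the bookkeeping of constants must be watched, since this whole section is about tracking explicit $\epsilon$-dependence downstream — is the passage from $\|Du\|_{L^2}$ to $\|u\|_{H^{1,2}}$ on $\Omega^h$: one needs the Poincar\'e constant to be uniform in $h$, which holds because all the $\Omega^h$ lie inside a fixed bounded domain (e.g. $\Omega_{\de_0}$), and one should check that this constant is indeed already subsumed in the generic constant $c$ appearing in $c_4=c(\Lambda/\la+\nu)$ and hence in $c_5=c_4+1$, so that no new factor of $\la$, $\nu$, $\Lambda$ enters $c_8$ beyond what is recorded in $c_2c_5$. Everything else is a direct concatenation of results proved earlier in the section, so no further obstacle is expected.
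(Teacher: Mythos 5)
Your proposal is correct and is precisely the argument the paper intends: the corollary is stated without proof, but the definition $c_8:=c_2c_5$ makes clear it is exactly the concatenation of the stability estimate (\ref{242}) with the a priori bound (\ref{241}) from Lemma \ref{220}, modulo the Poincar\'e/norm-equivalence step you correctly flag as uniform in $h$.
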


To prove Theorem  \ref{221} we would like to apply the Schatz argument, cf. \cite[Theorem 5.7.6]{BS} or \cite{Sc}, which uses the adjoint operator $L^{*}$ given by
\beq
L^{*} : H^{1,2}_0(\tilde \Omega) \rightarrow  H^{-1,2}(\tilde \Omega), \quad L^{*}u = D_i(a^{ij}D_ju - c^iu), 
\eeq
i.e.
\beq
\left<Lu,v\right>_{H^{-1}, H^1} = \left<u,L^{*}v\right>_{H^{1}, H^{-1}} \quad \forall u, v \in H^{1,2}_0(\tilde \Omega),
\eeq 
and needs that in the situation $\tilde \Omega = \Omega^h$ the space $H^{1,2}_0(\Omega^h)$ lies in the image of  $L^{*}$ and---for our case--- that $L^2$-estimates for $L^{*}$ with explicit constants are available. 
But both is not ensured, because $D_ic^i$ does not have the right sign necessarily and $\partial \Omega^h$ might lack the needed regularity (e.g. $\partial \Omega^h\in C^{0,1}$ and $\Omega^h$ convex). 

In the remaining part of this section we prove Theorem  \ref{221} using a modified Schatz argument.

In view of \cite[Theorem 8.6]{GT} there exists a countable set $\Sigma \subset \mathbb{R}$ so that for all $\sigma \notin \Sigma$ and all $g \in L^2(\tilde \Omega)$ there exists a unique solution $u \in H^{1,2}_0(\tilde \Omega)$ of the equation
\beq
(L^{*} + \sigma) u = g.
\eeq
$\Sigma$ depends on $h$ and $\de$, and in the following we will only use, that for  $h$ and $\de$ fixed the corresponding $\mathbb{R}\backslash\Sigma$ has 0 as accumulation point.

\begin{proof}[Proof of Theorem \ref{221}]

(i) Let $u$ be the unique solution of (\ref{201}) in $\Omega^h$. We assume that $u_h$ is a FE solution of (\ref{201}) in $V_h$ and extend $u, u_h$ by 0 to $\mathbb{R}^{n+1}$. Set $\de = h^{\frac{3}{2}}$, then
\beq	
\forall_{0 <h<h_0}  \quad \partial \Omega^h\subset \Omega_{\frac{\de}{2}}\backslash \Omega_{-\frac{\de}{2}}, 
\eeq
cf. (\ref{851}).

Choose a positive 
\beq \label{824}
\frac{1}{4c_3}> \sigma \notin \Sigma
\eeq
and let $w\in H^{1,2}_0(\Omega_{\de})$ be the unique solution of 
\beq \label{231}
(L^{*}+\sigma) w = u-u_h
\eeq
in $\Omega_{\de}$.
Then for all $w_h \in V_h$ we have
\bea \label{310}
\|u-u_h\|^2_{L^2(\Omega_{\de})} =&\left<(L^{*}+\sigma) w, u-u_h\right>_{H^{-1}(\Omega_{\de}), H^1(\Omega_{\de})} \\
=& \int_{\Omega_{\de}}\sigma w (u-u_h) + \left<w, L(u-u_h)\right>_{H^{1}(\Omega_{\de}), H^{-1}(\Omega_{\de})} \\
& -\left<w_h, L(u-u_h)\right>_{H^{1}(\Omega^h), H^{-1}(\Omega^h)} \\
\le& \sigma \|w\|_{L^2(\Omega_{\de})} \|u-u_h\|_{L^2(\Omega^h)}  \\
& +  (\Lambda+\nu\la)\|u-u_h\|_{H^{1,2}(\Omega^h)}
\|w-w_h\|_{H^{1,2}(\Omega^h)}.
\eea

(ii) Let $z \in L^2(\Omega_{\de})$, $\|z\|_{L^2(\Omega_{\de})}\le 1$ arbitrary. Then choose $\tilde z \in H^{1,2}_0(\Omega_{\de}) \cap H^{2,2}(\Omega_{\de})$ such that $L\tilde z = z$. From (\ref{230}) we deduce that
\beq
\|\tilde z\|_{H^{2,2}(\Omega_{\de})} \le c_3 \|z\|_{L^2(\Omega_{\de})}
\eeq
and get 
\bea 
\int_{\Omega_{\de}}wz =& \left<w, L \tilde z\right>_{H^1(\Omega_{\de}), H^{-1}(\Omega_{\de})} \\
=& \left< L^{*}w, \tilde z\right>_{H^{-1}(\Omega_{\de}), H^1(\Omega_{\de})} \\
=& \left<u-u_h-\sigma w, \tilde z\right>_{H^{-1}(\Omega_{\de}), H^1(\Omega_{\de})} \\
\le& \|u-u_h\|_{L^2(\Omega^h)}\|\tilde z\|_{L^2(\Omega_{\de})} + \sigma \|w\|_{L^2(\Omega_{\de})}\|\tilde z\| _{L^2(\Omega_{\de})} \\
\le& c_3 \|u-u_h\|_{L^2(\Omega^h)}\|z\|_{L^2(\Omega_{\de})} + c_3\sigma \|w\|_{L^2(\Omega_{\de})}\| z\| _{L^2(\Omega_{\de})}. 
\eea
Taking the supremum with respect to $z$ yields
\beq
\|w\|_{L^2(\Omega_{\de})} \le c_3\|u-u_h\|_{L^2(\Omega^h)}+ c_3\sigma \|w\|_{L^2(\Omega_{\de})}
\eeq
and therefore in view of (\ref{824})
\beq
\|w\|_{L^2(\Omega_{\de})} \le  2c_3 \|u-u_h\|_{L^2(\Omega^h)}.
\eeq
We use $w$ as a test function in (\ref{231}) and get
\beq
\|Dw\|_{L^2(\Omega_{\de})} \le cc_3\nu\|u-u_h\|_{L^2(\Omega_{\de})}
\eeq
hence $L^2$-estimates lead to 
\beq
\|w\|_{H^{2,2}(\Omega_{\de})} \le c_6\|u-u_h\|_{L^2(\Omega^h)} 
\eeq
with
\beq
c_6:=cc_3\Lambda\nu\frac{a_1}{\la^2}.
\eeq

(iii) We estimate 
\beq
\inf_{w_h \in V_h} \|w-w_h\|_{H^{1,2}(\Omega^h)}
\eeq
from above.
 
 Let $\tilde w$ be an extension of 
 \beq
 w_{ |\Omega_{\de}\backslash  \Omega_{-\de/2} }
 \eeq
 so that
 \beq
 \tilde w \in H^{2,2}_0(\Omega_{2\de}\backslash \Omega_{-4\de}), \quad \|\tilde w\|_{H^{2,2}(\Omega_{2\de}\backslash  \Omega_{-4\de })} \le
 c \|w\|_{H^{2,2}(\Omega_{\de}\backslash  \Omega_{-\de/2 })}. 
 \eeq
For $w_h \in V_h$ we have
 \bea
 \|w-w_h\|_{H^{1,2}(\Omega^h)} \le& \|w-\tilde w-w_h\|_{H^{1,2}(\Omega^h)} + \|\tilde w\|_{H^{1,2}(\Omega^h)}. 
 \eea
 Since $w-\tilde w \in H^{1,2}_0(\Omega^h)$ there holds
 \beq
 \inf_{w_h \in \Omega^h}\|w-\tilde w-w_h\|_{H^{1,2}(\Omega^h)} \le cc_6h \|u-u_h\|_{L^2(\Omega^h)}
 \eeq
 and, furthermore, chosing
 \bea \label{910}
 p  
 \begin{cases}
= (\frac{1}{2}-\frac{1}{n+1})^{-1}, &\quad \text{if } n>1, \\
 >2,  &\quad \text{if } n=1,
 \end{cases}
 \eea
 we have
 \bea
 \|\tilde w\|_{H^{1,2}(\Omega^h)}\le& \|\tilde w\|_{H^{1,2}(\Omega_{2\de}\backslash \Omega_{-4\de})} \\
 \le&c\de^{q} \|\tilde w\|_{H^{1,p}(\Omega_{2\de}\backslash \Omega_{-4\de})} \\
 \le& c\de^{q} \|\tilde w\|_{H^{2,2}(\Omega_{2\de}\backslash \Omega_{-4\de})} \\
 \le & cc_6\de^{q}\|u-u_h\|_{L^2(\Omega^h)},
 \eea
 where
 \beq \label{911}
 q=\frac{1}{2(p/2)^{*}},
 \eeq
 and hence
 \bea \label{351}
 \inf_{w_h\in V_h}\|w-w_h\|_{H^{1,2}(\Omega^h)} \le& cc_6(h+\de^{q})\|u-u_h\|_{L^2(\Omega^h)}.
 \eea
Combining (\ref{351}) and (\ref{310}) yields
\bea
\|u-u_h\|^2_{L^2(\Omega^h)}  & \le  
 c c_6(\Lambda+\nu\la)(h + \de^{q})\|u-u_h\|_{H^{1,2}(\Omega^h)}\|u-u_h\|_{L^2(\Omega^h)}
\eea
and therefore
\beq
\|u-u_h\|_{L^2(\Omega^h)}   \le  
c_7(h + \de^{q})\|u-u_h\|_{H^{1,2}(\Omega^h)}
\eeq
with
\beq \label{861}
c_7:= c c_6(\Lambda+\nu\la).
\eeq

(iv) We have for any $v_h \in V_h$
\bea
\frac{\la}{2}\|u-u_h\|^2_{H^{1,2}(\Omega^h)} \le& \left< L(u-u_h), u-u_h\right> + \nu^2\la\|u-u_h\|^2_{L^2(\Omega^h)} \\
=& \left< L(u-u_h), u-v_h\right> +\nu^2\la\|u-u_h\|^2_{L^2(\Omega^h)} \\
\le& 
(\Lambda+\nu\la)\|u-u_h\|_{H^{1,2}(\Omega^h)}\|u-v_h\|_{H^{1,2}(\Omega^h)} \\
&+ \nu^2\la c_7^2(h+\de^{q})^2
\|u-u_h\|^2_{H^{1,2}(\Omega^h)} 
\eea
and hence 
\beq
\|u-u_h\|_{H^{1,2}(\Omega^h)} \le c(\frac{\Lambda}{\la}+\nu)\|u-v_h\|_{H^{1,2}(\Omega^h)}.
\eeq

(v) Existence  of a FE solution  $u_h$ of (\ref{201}) follows in the usual way. Due to the quadratic structure of the corresponding system of linear equations, which determines $u_h$, we deduce existence from uniqueness, at which the latter is given in view of (\ref{241}) and (\ref{242}).
\end{proof}

\section{Proof of Theorem \ref{main_thm_endversion}} \label{Section_FE}
In this section we  will prove Theorem \ref{main_thm_endversion}, for it we obtain  the solution $u^{\epsilon}_h$ of (\ref{930}) as the unique fixed point of a map $T:V_h\rightarrow V_h$ in $\bar B^h_{\rho}$, cf. (\ref{912}), which will be defined in (\ref{204}). We show that in the situation of Theorem \ref{main_thm_endversion} we can choose $\be, \ga, \eta>0$ (and these values can be calculated explicitly) so that 
\beq \label{913}
\bar B^h_{\rho} \neq \emptyset,
\eeq
\beq \label{914}
\|Tw_h-Tv_h\|_{H^{1, \mu}(\bar \Omega^h)} \le c h^{\eta} \|w_h-v_h\|_{H^{1, \mu}(\bar \Omega^h)}
\quad \forall w_h, v_h \in \bar B^h_{\rho}
\eeq
and
\beq \label{915}
T({\bar B^{h}_{\rho}}) \subset {\bar B^{h}_{\rho}},
\eeq
i.e. Theorem \ref{main_thm_endversion} follows from Banach's fixed point theorem. 

(i) We define the map $T$.
 
 We define for $\epsilon>0$ and $z \in \mathbb{R}^n$
\beq
|z|_{\epsilon}:=f_{\epsilon}(z):=\sqrt{|z|^2+\epsilon^2}
\eeq
and denote derivatives of $f_{\epsilon}$ with respect to $z^i$ by $D_{z^i}f_{\epsilon}$.
There holds
\beq
D_{z^i}f_{\epsilon}(z)= \frac{z_i}{|z|_{\epsilon}}, \quad 
D_{z^i}D_{z^j}f_{\epsilon}(z)=\frac{\de_{ij}}{|z|_{\epsilon}} - \frac{z_iz_j}{|z|^3_{\epsilon}}.
\eeq
We define the operator $\Phi_{\epsilon}$ by
\beq
 \Phi_{\epsilon}: H^{1,2}_0(\Omega)\rightarrow H^{-1,2}_0(\Omega), \quad \Phi_{\epsilon}(v)=- D_i \left(\frac{D_i v}{|D v|_{\epsilon}}\right) - \frac{1}{|D v|_{\epsilon}^{\frac{1}{k}}}, 
\eeq
so that (\ref{regularized_levelset_pmcf}) can be written as
\beq
\Phi_{\epsilon}(u^{\epsilon})=0.
\eeq
We denote the derivative of $\Phi_{\epsilon}$ in $u^{\epsilon}$ by
\beq
L_{\epsilon}:=D\Phi_{\epsilon}(u^{\epsilon})
\eeq
and have for all $\varphi \in H^{1,2}_0(\Omega)$ that
\beq
L_{\epsilon}\varphi = -D_i\left(D_{z^i}D_{z^j}f_{\epsilon}(D u^{\epsilon})D_j \varphi\right)
+\frac{1}{k} f_{\epsilon}(D u^{\epsilon})^{-1-\frac{1}{k}}D_{z^j}f_{\epsilon}(D u^{\epsilon})D_j \varphi.
\eeq
We will apply the results of Section \ref{Section_linear_equations} to the linear differential operator $L=L_{\epsilon}$, where we consider---having (\ref{865}) in mind---$L_{\epsilon}$ (and also $M_{\epsilon})$ to be defined in $H^{1,2}_0(\Omega^h)$, $h>0$ small; one observes that $L_{\epsilon}$ has the structure (\ref{201}) and explicit values for the constants $\la, \Lambda, \nu, a_1, h_0$ in (\ref{864}), (\ref{869}) and (\ref{817}) can be obtained in terms of $\epsilon$ (this dependence is polynomial in $\epsilon$ and $\frac{1}{\epsilon}$)
using the results of Section \ref{higher_order_estimates} and (\ref{865}).
Let us denote the constant $c_8$ in Corollary \ref{823} adapted to the case of $L=L_{\epsilon}$ (as an operator defined in $H^{1,2}_0(\Omega^h)$) by $\tilde c_8$.
 
We define 
$T: V_h\rightarrow V_h$ by
\beq \label{204}
L_{\epsilon} (w_h-Tw_h) = \Phi_{\epsilon}(w_h), \quad w_h \in V_h.
\eeq

(ii) We check condition (\ref{913}).
 
 Let 
\beq
I_h:C^0(\bar \Omega^h)\rightarrow \tilde V_h,
\eeq
be the unique interpolation operator with
\beq
I_hu(p)=u(p)
\eeq
for all $u \in C^0(\bar \Omega^h)$ and $p\in N_h$, where
\beq
\tilde V_h:=\{w \in C^0(\bar \Omega^h): \forall_{T\in \mathbb{T}_h}w|_T \text{ polynom of degree} \le 2\}
\eeq
and
\bea
N_h:=\left\{ p \in \bar \Omega^h: p \text{ vertex or midpoint of an edge  of a tetraeder } T\in \mathbb{T}_h\right\}.
\eea
We have
\beq \label{866}
\|u-I_hu\|_{H^{1,\infty}(\Omega^h)} \le ch^{2}\|u\|_{C^{3}(\bar \Omega^h)} \quad \forall \; u \in C^{3}(\bar \Omega ^h), 
\eeq
define $z_h \in \tilde V_h$  by
\beq
z_h (p)= \begin{cases}I_hu^{\epsilon}(p), \quad &\text{if }p \in N_h \cap \partial \Omega^h, \\
0, \quad &\text{if }p \in N_h \backslash \partial \Omega^h \end{cases}
\eeq
and set
\beq \label{883}
\tilde u^{\epsilon}:= I_hu^{\epsilon}-z_h.
\eeq
Then $\tilde u^{\epsilon}\in V_h$ and for all $1\le q \le \infty$ 
\beq \label{901}
\|\tilde u^{\epsilon}-u^{\epsilon}\|_{H^{1,q}(\Omega^h)} \le c h^{1+\frac{1}{q}}\|u^{\epsilon}\|_{C^{3}(\bar \Omega^h)}, 
\eeq
which follows from 
\bea
\|z_h\|_{C^0(\bar \Omega^h)} \le ch^2, \quad \|Dz_h\|_{L^{\infty}(\Omega^h)} \le ch
\eea
and that the support of $ z_h$ lies in a boundary strip of measure $\le ch$. 

We conclude $\tilde u^{\epsilon} \in \bar B^h_{\rho}$ provided $\be, \ga>0$ are sufficiently large.

(iii) We check condition (\ref{914}).

Let $q>n+1$ and $v_h, w_h \in \bar B^{h}_{\rho}$, $\xi_h = v_h-w_h$, $\al(t)=w_h + t\xi_h$, $0\le t \le 1$, then using (\ref{204}) we conclude

\beq \label{501}
L_{\epsilon} (Tv_h-Tw_h)  = L_{\epsilon}\xi_h + \Phi_{\epsilon}(w_h)-\Phi_{\epsilon}(v_h).
\eeq
The right-hand side of (\ref{501}) is of the form $D_if^{i}+g$ with
\bea \label{868}
f^i =& D_{z^i}f_{\epsilon}(D v_h)-D_{z^i}f_{\epsilon}(D w_h)-D_{z^i}D_{z^j}f_{\epsilon}D_j \xi_h \\
=& \int_0^1\left(D_{z^j}D_{z^i}f_{\epsilon}(D \al(t))- D_{z^i}D_{z^j}f_{\epsilon}\right) D_j \xi_h
\eea
and
\bea \label{867}
g=& \frac{1}{k}f_{\epsilon}^{-1-\frac{1}{k}}D_{z^j}f_{\epsilon}D_j \xi_h + f_{\epsilon}(D v_h)^{-\frac{1}{k}}- f_{\epsilon}(D w_h)^{-\frac{1}{k}} \\
=& \frac{1}{k}\int_0^1 \left( f_{\epsilon}^{-1-\frac{1}{k}}D_{z^j}f_{\epsilon}-
f_{\epsilon}^{-1-\frac{1}{k}}(D \al(t))D_{z^j}f_{\epsilon}(D \al(t))\right)D_j \xi_h.
\eea
We have
\bea
\|Dw_h-Du^{\epsilon}\|_{L^{\infty}(\Omega^h)} \le&
\|Dw_h-DI_h u^{\epsilon}\|_{L^{\infty}(\Omega^h)}+\|DI_hu^{\epsilon}-Du^{\epsilon}\|_{L^{\infty}(\Omega^h)}  \\
\le& ch^{-\frac{n+1}{\mu}}(\|Dw_h-Du^{\epsilon}\|_{L^{\mu}(\Omega^h)}\\
&+\|Du^{\epsilon}-DI_h u^{\epsilon}\|_{L^{\mu}(\Omega^h)}) +ch^2\|u^{\epsilon}\|_{C^3(\bar \Omega^h)}\\
\le& ch^{-\frac{n+1}{\mu}} (\rho + h^{2}\|u^{\epsilon}\|_{C^3(\bar \Omega^h)}),
\eea
where we used an inverse estimate and (\ref{866}). We estimate the integrals in (\ref{867}) and (\ref{868}) by mean value theorem and get with a constant $c_9:=c_9(\epsilon, k)$ 
\bea
\|f^i\|_{L^{q}(\Omega^h)} + \|g\|_{L^2(\Omega^h)} \le& c c_9 h^{\frac{n+1}{q}-2\frac{n+1}{\mu}} (\rho + h^{2})\rho,
\eea
Therefore we get
\bea \label{870}
\|T{v_h}-Tw_h\|_{H^{1,\mu}(\Omega^h)} \le& h^{\frac{n+1}{\mu}-\frac{n+1}{2}} \|T{v_h}-Tw_h\|_{H^{1,2}(\Omega^h)} \\
\le& c \tilde c_8 c_9 h^{\frac{n+1}{q}-\frac{n+1}{\mu}-\frac{n+1}{2}} (\rho + h^{2}) \rho,
\eea
in view of Corollary \ref{823}.

Assuming $n=1$ and $2<q<\mu$ we have
\beq
1+\frac{2}{\mu}-\frac{2}{q} <1< \de
\eeq
and hence (\ref{914}) holds provided $\be>0$ is sufficiently large.

(iv) We check condition (\ref{915}).

Let $q>n+1$ and $w_h \in V_h$. We have
\bea
\|Tw_h-u^{\epsilon}\|_{H^{1, \mu}(\Omega^h)} \le& 
\|Tw_h-T\tilde u^{\epsilon}\|_{H^{1, \mu}(\Omega^h)} +\|T\tilde u^{\epsilon}-\tilde u^{\epsilon}\|_{H^{1, \mu}(\Omega^h)} \\
&+\|\tilde u^{\epsilon}-u^{\epsilon}\|_{H^{1, \mu}(\Omega^h)}
\eea 
We estimate the three terms on the right-hand side of this inequality separately and get
\beq
\|\tilde u^{\epsilon}-u^{\epsilon}\|_{H^{1, \mu}(\Omega^h)} \le c h^{1+\frac{1}{\mu}}\|u^{\epsilon}\|_{C^3(\bar \Omega^h)}
\eeq
and
\bea \label{916}
\|Tw_h-T\tilde u^{\epsilon}\|_{H^{1, \mu}(\Omega^h)} \le&
c h^{\eta} \|w_h-\tilde u^{\epsilon}\|_{H^{1, \mu}(\Omega^h)} \\
\le& c h^{\eta} \|w_h- u^{\epsilon}\|_{H^{1, \mu}(\Omega^h)} 
+c h^{\eta} \|u^{\epsilon}-\tilde u^{\epsilon}\|_{H^{1, \mu}(\Omega^h)} \\
\le& ch^{\eta}\rho + c h^{\eta+1+\frac{1}{\mu}}\|u^{\epsilon}\|_{C^3(\bar \Omega^h)}.
\eea
Let $\xi = u^{\epsilon}-\tilde u^{\epsilon}$, $\al (t)= \tilde u^{\epsilon} + t \xi$, $0\le t\le1$.
We have in $\Omega^h$
\bea
L_{\epsilon}\left(\tilde u^{\epsilon}-T(\tilde u^{\epsilon})\right) =& \Phi_{\epsilon}(\tilde u^{\epsilon}) \\
=& \Phi_{\epsilon}(\tilde u^{\epsilon}) -\Phi_{\epsilon}(u^{\epsilon}) + \Phi_{\epsilon}(u^{\epsilon}).
\eea
and the right-hand side of this equation is of the form $D_if^i+g$ with
\bea
f^i =& -D_{z^i}f_{\epsilon}(D \tilde u^{\epsilon}) + D_{z^i}f_{\epsilon}(D u^{\epsilon}) \\
=& \int_0^1 D_{z^j}D_{z^i}f_{\epsilon}(D \al(t))D_j\xi
\eea
and
\bea
g =& -f_{\epsilon}(D \tilde u^{\epsilon})^{-\frac{1}{k}}+
f_{\epsilon}(D u^{\epsilon})^{-\frac{1}{k}} + \Phi_{\epsilon}(u^{\epsilon})  \\
=& \int_0^1 D_{z^i}f_{\epsilon}(D \al(t))D_i\xi + \Phi_{\epsilon}(u^{\epsilon}) 
\eea
We have
\bea
\|f^i\|_{L^{q}(\Omega^h)} + \|g\|_{L^2(\Omega^h)} \le& c_{10} \|D \xi\|_{L^{q}(\Omega^h)} +c_{11}h^3
\eea
with $c_{10}:=c_{10}(\epsilon)$ suitable and $c_{11}:= \sup_{\Omega^h} |D(\Phi_{\epsilon}(u^{\epsilon}))|$.
Finally, we get
\bea
\|\tilde u^{\epsilon}-T(\tilde u^{\epsilon})\|_{H^{1,\mu}(\Omega^h)} \le& c\tilde c_8 h^{\frac{n+1}{\mu}-\frac{n+1}{2}}(c_{10}h^{1+\frac{1}{q}}\|u^{\epsilon}\|_{C^3(\bar \Omega^h)}+c_{11}h^3) \\
\le& c_{12}h^{\frac{n+1}{\mu}-\frac{n+1}{2}+1+\frac{1}{q}}
\eea
with $c_{12}=c_{12}(\epsilon)$.

To allow for (\ref{915}) in case $n=1$ it is sufficient to have
\beq
\de < \frac{2}{\mu}+\frac{1}{q},
\eeq
which holds for $q>2$ close to 2, and $\be>0$ sufficiently large.

\vspace{0.5cm}

{\bf Acknowledgment:} The author would like to thank Guy Barles and Andreas Prohl for discussion and the SFB-Transregio 71 of the German Science Foundation (DFG) for funding.

\end{document}